\tikzset{vertex/.style={circle,fill,draw,scale=.2}}
\newtheorem{theorem}{Theorem}
\newtheorem{corollary}[theorem]{Corollary}
\newtheorem{example}[theorem]{Example}
\newtheorem{lemma}[theorem]{Lemma}
\newtheorem{proposition}[theorem]{Proposition}
\begin{document}

\title{On Pr\"{u}fer-like Properties of Leavitt Path Algebras}
\footnotetext{2010 \textit{Mathematics Subject Classification}: 16D25; 13F05 \\
\textit{Key words and phrases:} Leavitt path algebras, Pr\"{u}fer domain}

\author{S. Esin, M. Kanuni, A. Ko\c{c}, K. Radler, K. M. Rangaswamy} 

\maketitle

\begin{abstract}
Pr\"{u}fer domains and subclasses of integral domains such as Dedekind domains
admit characterizations by means of the properties of their ideal lattices.
Interestingly, a Leavitt path algebra $L$, in spite of being non-commutative
and possessing plenty of zero divisors, seems to have its ideal lattices
possess the characterizing properties of these special domains. In [8] it was
shown that the ideals of $L$ satisfy the distributive law, a property of
Pr\"{u}fer domains and that $L$ is a multiplication ring, a property of Dedekind domains. In this paper, we first show that $L$ satisfies two more
characterizing properties of Pr\"{u}fer domains which are the ideal versions of
two theorems in Elementary Number Theory, namely, for positive integers
$a,b,c$, $\gcd(a,b)\cdot\operatorname{lcm}(a,b)=a\cdot b$ and $a\cdot
\operatorname{gcd}(b,c)=\operatorname{gcd}(ab,ac)$. We also show that $L$
satisfies a characterizing property of almost Dedekind domains in terms of the
ideals whose radicals are prime ideals. Finally, we give necessary and
sufficient conditions under which $L$ satisfies another important
characterizing property of almost Dedekind domains, namely the cancellative
property of its non-zero ideals.

\end{abstract}

\section{Introduction\protect\bigskip }

This paper is devoted to investigating some of the Pr\"{u}fer-like properties
of the ideals in a Leavitt path algebra $L:=L_{K}(E)$ of an arbitrary directed
graph over a field $K$. Recall that an integral domain $D$ is called a
\textit{valuation domain }if its ideals are totally ordered by set inclusion.
$D$ is called a Pr\"{u}fer \textit{domain} if all its localizations at maximal
ideals are valuation domains. $D$ is called an \textit{almost Dedekind domain}
if all its localizations at maximal ideals are noetherian valuation domains
and $D$ is called a \textit{Dedekind domain} if it is a noetherian domain and
all its localizations at maximal ideals are noetherian valuation domains
(see \cite{Larsen}). There are many
equivalent characterizations of Pr\"{u}fer domains that are widely studied in
the literature.  Some of the characterizations of Pr\"{u}fer domains can be listed as given in \cite[Theorem 6.6]{Larsen}:
\begin{enumerate}
\item $R$ is a Pr\"{u}fer domain.
\item If $AB=AC,$ where $A,B,C$ are ideals of $R$ and $A$ is finitely
generated and nonzero, then $B=C.$
\item $A(B\cap C)=AB\cap AC$ for all ideals $A,B,C$ of $R.$
\item $(A+B)(A\cap B)=AB$ for all ideals $A,B$ of $R.$
\item $A\cap (B+C)=(A\cap B)+(A\cap C)$ for all ideals $A,B,C$ of $R.$
\item If $A$ and $C$ are ideals of $R,$ with $C$ finitely generated, and if $
A\subseteq C,$ then there is an ideal $B$ of $R$ such that $A=BC.$
\end{enumerate}

Although a Leavitt path algebra $L$ is non-commutative in nature and has
plenty of zero divisors, it is somewhat intriguing and certainly interesting
that the ideals of such a highly non-commutative algebra share many of the
properties of the ideals of various types of (commutative) integral domains.
To start with, the multiplication of ideals in $L$ is commutative (\cite{AAS},
\cite[Theorem 3.4]{Mulp-Ranga}), $L$ satisfies the property of a B\'{e}zout domain,
namely, all the finitely generated ideals of $L$ are principal (\cite{R-2}),
every ideal of $L$ is projective, a property of Dedekind domains and the ideal
lattice of $L$ is distributive (\cite{Mulp-Ranga}) which characterizes Pr\"{u}fer
domains among integral domains.\\\\
In this paper, we consider characterizing properties of Pr\"{u}fer domains
which are ideal versions of well-known theorems in elementary number theory.
Recall that if $a,b,c$ are positive integers, then $\gcd(a,b)\cdot
\operatorname{lcm}(a.b)=a\cdot b$ and $a\cdot\gcd(b,c)=\gcd(ab,ac)$ and
$a\cdot\operatorname{lcm}(b,c)=\operatorname{lcm}(ab,ac)$. The first two results when
expressed in terms of its ideals lead to a characterization of Pr\"{u}fer
domains: An integral domain $D$ is a Pr\"{u}fer domain if and only if, for any
two non-zero ideals $A,B$ of $D$, $(A\cap B)(A+B)=AB$ and, if and only if, for
any three ideals $A,B,C$ in $D$, we have $A(B\cap C)=AB\cap AC$ (see \cite{Larsen}).
We will show that every Leavitt path algebra satisfies these two
characterizing properties. (Note that the ideal version of the result $a.lcm(b,c) = lcm (ab,ac)$, namely, $A(B+C) = AB + AC$  holds for ideals $A$, $B$, $C$ in any ring $R$.) We also investigate whether a Leavitt path algebra
$L$ possesses any of the properties of almost Dedekind domains which form a
subclass of the class of Pr\"{u}fer domains. It is known (see \cite{Larsen}) that
an integral domain $D$ is an almost Dedekind domain if and only if every ideal
in $D$, whose radical is prime, is a power of a prime ideal. We show that every
Leavitt path algebra also possesses this characterizing property. As a corollary, we show that two more properties 
of a Dedekind domain are satisfied by Leavitt path algebras. Almost Dedekind domains $D$ are also characterized 
among integral domains by the property that every non-zero ideal $A$ of $D$ is cancellative, that is,
$AB=AC$ implies $B=C$ for any two ideals $B,C$ of $D$. While not every Leavitt
path algebra satisfies this property, we give necessary and sufficient
conditions on the graph $E$ under which every non-zero ideal of $L_{K}(E)$ is
cancellative. Various graphical constructions illustrate our results.\\

\section{Preliminaries}

In this section, we will mention some of the needed basic concepts and results in Leavitt path algebras. 
A \textit{directed graph} $E=(E^{0},E^{1},r,s)$ consists of two countable
sets $E^{0},E^{1}$ and functions $r,s:E^{1}\rightarrow E^{0}$. The elements $%
E^{0}$ and $E^{1}$ are called \textit{vertices} and \textit{edges},
respectively. For each $e\in E^{1}$, $s(e)$ is the source of $e$ and $r(e)$
is the range of $e.$ If $s(e)=v$ and $r(e)=w$, then we say that $v$ emits $e$
and that $w$ receives $e$. A vertex which does not receive any edges is
called a \textit{source,} and a vertex which emits no edges is called a 
\textit{sink}. A vertex $v$  is called a \textit{regular vertex} if it emits a non-empty finite set of edges. A vertex is called 
an \textit{infinite emitter} if it emits infinitely many edges.

 A graph is called \textit{\ row-finite} if $s^{-1}(v)$ is a
finite set for each vertex $v$. For a row-finite graph the edge set $E^{1}$
of $E~$is finite if its set of vertices $E^{0}$ is finite. Thus, a
row-finite graph is finite if $E^{0}$ is a finite set.

A path in a graph $E$ is a sequence of edges $\mu =e_{1}\ldots e_{n}$ such
that $r(e_{i})=s(e_{i+1})$ for $i=1,\ldots ,n-1.$ In such a case, $s(\mu
):=s(e_{1})$ is the \textit{source }of $\mu $ and $r(\mu ):=r(e_{n})$ is the 
\textit{range} of $\mu $, and $n$ is the \textit{length }of $\mu ,$ i.e., $
l(\mu )=n$. If $s(\mu )=r(\mu )$ and $s(e_{i})\neq s(e_{j})$ for every $i\neq j$, then $
\mu $ is called a \textit{cycle}.

An exit for a  path $\mu =e_{1}\ldots e_{n}$ is an edge $e$ such that $s(e)=s(e_i)$ for some $i$ and $e \neq e_i$. 
A graph $E$ is said to satisfy \textit{condition} (L) if every cycle in $E$ has an exit.


	A subset $ D $ of vertices is said to be {\it downward directed} if for any $ u, v \in D $, there exists a $ w\in D $ such that $ u \geq w $ and $ v \geq w $. A subset $H$ of $E^0$ is called {\it hereditary} if whenever $v \in H$ and $w \in E^0$ for which $v \geq w$, then $w \in H$. $H$ is {\it saturated} if whenever a regular vertex $v$ has the property that $\{r(e)| e \in E^1, s(e)=v\} \subseteq H$, then $v \in H$. 

The path $K$-algebra over $E$ is defined as the free $K$-algebra $%
K[E^{0}\cup E^{1}]$ with the relations:

\begin{enumerate}
\item[(1)] $v_{i}v_{j}=\delta _{ij}v_{i}$ \ for every $v_{i},v_{j}\in E^{0}.$

\item[(2)] $e_{i}=e_{i}r(e_{i})=s(e_{i})e_{i}$ $\ $for every $e_{i}\in
E^{1}. $
\end{enumerate}

This algebra is denoted by $KE$. Given a graph $E,$ define the extended
graph of $E$ as the new graph $\widehat{E}=(E^{0},E^{1}\cup (E^{1})^{\ast
},r^{\prime },s^{\prime })$ where $(E^{1})^{\ast }=\{e_{i}^{\ast
}~|~e_{i}\in E^{1}\}$ and the functions $r^{\prime }$ and $s^{\prime }$ are
defined as 
\begin{equation*}
r^{\prime }|_{E^{1}}=r,~~~~s^{\prime }|_{E^{1}}=s,~~~~r^{\prime
}(e_{i}^{\ast })=s(e_{i})~~~~~~\text{and~~~~~}s^{\prime }(e_{i}^{\ast
})=r(e_{i}).
\end{equation*}
The Leavitt path algebra of $E$ with coefficients in $K$ is defined as the
path algebra over the extended graph $\widehat{E},$ with relations:

\begin{enumerate}
\item[(CK1)] $e_{i}^{\ast }e_{j}=\delta _{ij}r(e_{j})$ \ for every $e_{j}\in
E^{1}$ and $e_{i}^{\ast }\in (E^{1})^{\ast }.$

\item[(CK2)] $v_{i}=\sum_{\{e_{j}\in
E^{1}~|~s(e_{j})=v_{i}\}}e_{j}e_{j}^{\ast }$ \ for every regular vertex $v_{i}\in E^{0}$.
\end{enumerate}

This algebra is denoted by $L_{K}(E)$. The conditions (CK1) and (CK2) are
called the Cuntz-Krieger relations. \\

A ring $R$ is called \textit{a ring with local units,} if for every non-empty
finite subset $X$ of $R$, there is a non-zero idempotent $u\in R$ such that $ux=x=xu$
for all $x\in X$. When  $E^0$ is finite, $L_{K}(E)$ is a ring with unit element 
$\displaystyle 1= \sum_{v\in E^{0}}v$. Otherwise, $L_{K}(E)$ is not a unital ring, but is a ring with local units 
consisting of sums of distinct elements of $E^0$.

\bigskip
 A useful observation is that every element $a$ of $L_{K}(E)$ can be written as
 $a=%
 {\textstyle\sum\limits_{i=1}^{n}}
 k_{i}\alpha_{i}\beta_{i}^{\ast}$, where $k_{i}\in K$, $\alpha_{i},\beta_{i}$
 are paths in $E$ and $n$ is a suitable integer. Moreover, $L_{K}(E)=%
 {\textstyle\bigoplus\limits_{v\in E^{0}}}
 L_{K}(E)v=%
 {\textstyle\bigoplus\limits_{v\in E^{0}}}
 vL_{K}(E).$ Another useful fact is that if $p^{\ast}q\neq0$, where $p,q$ are
 paths, then either $p=qr$ or $q=ps$ where $r,s$ are suitable paths in $E$.\\

\bigskip

One of the most important properties of Leavitt path algebras is that each $L_K(E)$ is a $\mathbb{Z}$-graded $K$-algebra. 
That is, 
$L_{K}(E)={\displaystyle\bigoplus\limits_{n\in\mathbb{Z}}}L_{n}$ 
induced by defining,  $\deg
(v)=0$ for all $v\in E^{0}$ and $\deg(e)=1$, $\deg(e^{\ast})=-1$ for all $e\in E^{1}$. Further,  the homogeneous component $L_{n}$ for each 
$n\in \mathbb{Z}$ is given by
\[L_{n}=\left\{{\textstyle\sum}
k_{i}\alpha_{i}\beta_{i}^{\ast}\in L:\text{ }l(\alpha_{i})-l(\beta
_{i})=n\right\} \mbox{ where } k_i \in K, \alpha_{i},\beta_{i} \in Path(E).
\]
An ideal $I$ of $L_{K}(E)$ is said to be a \textit{graded ideal} if 
$I={\displaystyle\bigoplus\limits_{n\in\mathbb{Z}}}(I\cap L_{n})$.\\\\

 We shall be using the following concepts and results from \cite{T}. A vertex $v$ is called a \textit{breaking vertex of a hereditary subset $H$} if $v$ belongs to the set
 \[
 B_H:=\{ v \in E^0 \backslash H | \text{ v is an infinite emitter and } 0 < | s^{-1} (v) \cap r^{-1} (E^0 \backslash H)| < \infty \}.
 \]
 In words, $B_H$ consists of those vertices of $E$ which are infinite emitters, which do not belong to $H$, and for which the ranges of the edges they emit are all, except for a finite (but nonzero) number, inside $H$. For $v \in B_H$, the element $v^H$ of $L_K(E)$ is defined by
 \[
 v^H:=v - \displaystyle\sum_{e\in s^{-1}(v) \cap r^{-1}(E^0 \backslash H)} ee^{\ast} .
 \]
 We note that any such $v^H$ is homogeneous of degree $0$ in the standard $\mathbb{Z}$-grading on $L_K(E)$.
 Given a hereditary saturated subset
 $H$ and a subset $S\subseteq B_{H}$, $(H,S)$ is called an \textit{admissible
 	pair.}  The ideal generated by $H\cup\{v^{H}:v\in
 S\}$ is denoted by $I(H,S)$ where $(H,S)$ is an admissible pair. 
 The \textit{quotient graph $E\backslash (H,S)$ of $E$ by an admissible pair $(H,S)$} is defined as follows:
 $$(E\backslash (H,S))^0 = (E^0 \backslash H) \cup \{ v' | v \in B_H \backslash S \},$$	
 $$(E\backslash (H,S))^1 = \{e \in E^1 | r(e) \notin H\} \cup \{ e' | e \in E^1 \text{ and } r(e) \in B_H \backslash S \},$$
 and range and source maps in $E\backslash (H,S)$ are defined by extending the range and source maps in $E$ when appropriate, and in addition setting $s(e')=s(e)$ and $r(e')=r(e)'$. It was shown in \cite{T} that $L_K(E)/ I(H,S) \cong L_K(E\backslash (H,S))$. \\
  
\bigskip

Let $\Lambda$ be an arbitrary non-empty set. Given a ring
$R$, $M_{\Lambda}(R)$ denotes the ring of matrices with entries from $R$, all
but finitely many of which are non-zero and where the rows and columns are
indexed by elements of $\Lambda$. We will be using an important result about
the ideals of the ring $M_{\Lambda}(R)$. This result was proved in Theorem 3.1
of \cite{L} when $R$ is a ring with identity $1$ and when $\Lambda$ is finite.
We need this result for rings with local units and when $\Lambda$ is an
arbitrary non-empty set. As far as we know, this generalized statement has not
appeared in print and so we give the general statement and its proof.
Thus Proposition \ref{Ideals in Mtrix rings}(a) below is a generalization of
Theorem 3.1 of \cite{L} to rings with local units. We thank Zak Mesyan for
help in writing the proof of Proposition \ref{Ideals in Mtrix rings}(b). For
rings $R$ with identity, this Proposition is also obtained by using the Morita
equivalence of $R$ and $M_{\Lambda}(R)$ (see \cite{AF}).\\\
\begin{proposition}
\label{Ideals in Mtrix rings} Suppose $R$ is a ring with local
units and $\Lambda$ is an arbitrary non-empty set.\\\\
(a) Every ideal of $M_{\Lambda}(R)$ is of the form $M_{\Lambda}(A)$ for some
ideal $A$ of $R$. The map $A\longmapsto M_{\Lambda}(A)$ defines a lattice
isomorphism between the lattice of ideals of $R$ and the lattice of ideals of
$M_{\Lambda}(R)$.\\\\
(b) For any two ideals $A,B$ of $R$, $M_{\Lambda}(AB)=M_{\Lambda}%
(A)M_{\Lambda}(B)$.
\end{proposition}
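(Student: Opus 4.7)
The plan is to prove (a) by associating to an ideal $\mathcal{I} \trianglelefteq M_\Lambda(R)$ the set
\[
A = \{r \in R : E_{ij}(r) \in \mathcal{I} \text{ for some } i, j \in \Lambda\},
\]
where $E_{ij}(r)$ denotes the matrix with entry $r$ in position $(i,j)$ and zeros elsewhere. The main technical step is to show that membership of $E_{ij}(r)$ in $\mathcal{I}$ does not depend on the choice of $(i,j)$. This is exactly where local units enter: given a witness $E_{ij}(r) \in \mathcal{I}$, pick $u \in R$ with $ur = r = ru$; then the direct matrix computation $E_{ki}(u) \cdot E_{ij}(r) \cdot E_{jl}(u) = E_{kl}(r)$ shows $E_{kl}(r) \in \mathcal{I}$ for every $(k,l) \in \Lambda \times \Lambda$.

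Once position-independence is in hand, the ideal property of $A$ is routine: for $r \in A$ and $s \in R$, fix any $(i,j)$ and note that $E_{ii}(s) \cdot E_{ij}(r) = E_{ij}(sr) \in \mathcal{I}$, so $sr \in A$ (and symmetrically $rs \in A$), while additive closure is immediate. To establish $\mathcal{I} = M_\Lambda(A)$, the inclusion $\mathcal{I} \subseteq M_\Lambda(A)$ proceeds by a coordinate-extraction trick: given $M = (m_{ij}) \in \mathcal{I}$, choose a \emph{single} local unit $u$ which absorbs each of the finitely many nonzero entries of $M$ simultaneously, and then $E_{ki}(u) \cdot M \cdot E_{jl}(u) = E_{kl}(m_{ij}) \in \mathcal{I}$ gives $m_{ij} \in A$. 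The reverse inclusion $M_\Lambda(A) \subseteq \mathcal{I}$ follows by writing $N \in M_\Lambda(A)$ as the finite sum $\sum_{(i,j)} E_{ij}(n_{ij})$, each term of which is in $\mathcal{I}$ by position-independence. The lattice isomorphism assertion then reduces to checking that the map $A \mapsto M_\Lambda(A)$ preserves intersections and sums of ideals, which is transparent.

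For (b), one containment is entrywise: for $P \in M_\Lambda(A)$, $Q \in M_\Lambda(B)$, the $(i,j)$-entry of $PQ$ is $\sum_{k} p_{ik} q_{kj} \in AB$, giving $M_\Lambda(A)M_\Lambda(B) \subseteq M_\Lambda(AB)$. For the reverse inclusion, take $M = (m_{ij}) \in M_\Lambda(AB)$ and, for each of the finitely many nonzero entries, write $m_{ij} = \sum_{k} a_{ijk} b_{ijk}$ with $a_{ijk} \in A$, $b_{ijk} \in B$. Then the identity
\[
E_{ij}(m_{ij}) = \sum_{k} E_{ii}(a_{ijk}) \cdot E_{ij}(b_{ijk})
\]
exhibits $E_{ij}(m_{ij})$ as a finite sum of products in $M_\Lambda(A) \cdot M_\Lambda(B)$, and summing over the finitely many nonzero positions $(i,j)$ expresses $M$ itself as such a sum.

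The main obstacle throughout is the absence of a multiplicative identity in $R$: both the position-independence argument and the coordinate-extraction step depend on threading a carefully chosen local unit $u$ through the computation, and one must verify that a single $u$ can be chosen to simultaneously act as a left and right identity on all of the finitely many nonzero entries of a given matrix in $\mathcal{I}$.
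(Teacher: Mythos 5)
Your proposal is correct and follows essentially the same route as the paper: both extract entries of a matrix ideal by sandwiching with matrix units carrying a suitably chosen local unit, the only cosmetic difference being that you define $A$ position-independently (and prove independence as a lemma) where the paper anchors $A$ at a fixed coordinate. Part (b) is argued the same way in both, decomposing an element of $M_{\Lambda}(AB)$ into single-entry matrices and factoring each as a product of a matrix in $M_{\Lambda}(A)$ with one in $M_{\Lambda}(B)$.
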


\begin{proof} If $A$ is an ideal of $R$, then it is easy to see that
$M_{\Lambda}(A)$ is an ideal of $M_{\Lambda}(R)$. Also, if the ideals $A\neq
B$, then $M_{\Lambda}(A)$ $\neq$ $M_{\Lambda}(B)$. Let $I$ be a non-zero ideal
of $M_{\Lambda}(R)$. We wish to show that $I=M_{\Lambda}(A)$ for some ideal
$A$ of $R$. Let $A$ be the set of all\ the entries at the (first row - first
column) position in all the matrices belonging to $I$. $A$ is clearly an ideal
in $R$. We wish to show that $I=M_{\Lambda}(A)$. Let $U$ denote the set of all
local units in $R$. Let $0\neq M=(m_{ij})\in I$. Corresponding to all the
finitely many non-zero entries $m_{ij}$ in $M$, choose a local unit $u\in U$
satisfying $um_{ij}=m_{ij}=m_{ij}u$ for all $i,j$. For any $k,l$, we have the
identity%
\[
E_{ij}uME_{kl}u=m_{jk}E_{il}u\qquad\qquad\qquad\qquad(\ast)
\]
where, for every $i,j$, $E_{ij}u$ denotes the $\Lambda\times\Lambda$ matrix
having $u$ at the $i$th row and $j$th column entry and $0$ at every other
entry. \ In particular, $E_{1j}uME_{k1}u=m_{jk}E_{11}u\in I$ and thus $m_{jk}\in A$
for all $j,k\in\Lambda$. Hence $M\in M_{\Lambda}(A)$. Conversely, let
$N=(a_{ij})\in M_{\Lambda}(A)$. Since $a_{il}\in A$, there is a matrix
$M=(m_{ij})\in I$ such that $m_{11}=a_{il}$. Let $v\in U$ be a local unit
satisfying $va_{ij}=a_{ij}=a_{ij}v$ $\ $and also $vm_{ij}=m_{ij}=m_{ij}v$ for
all the entries $a_{ij}$ in $N$. It is enough to show that $a_{il}E_{il}v\in
I$ for all $i,l$. Applying the identity $(\ast)$, we have
\[
a_{il}E_{il}v=m_{11}E_{il}v=E_{i1}vME_{1l}\in I\text{, for all }i,l\text{. }%
\]
This proves that $I=M_{\Lambda}(A)$.\\\\
It is straightforward to verify that, for any two ideals $A,B$ of $R$,
$M_{\Lambda}(A+B)=M_{\Lambda}(A)+M_{\Lambda}(B)$ and $M_{\Lambda}(A\cap
B)=M_{\Lambda}(A)\cap M_{\Lambda}(B)$. This shows that map $A\longmapsto
M_{\Lambda}(A)$ defines a lattice isomorphism.\\\\
We next show that $M_{\Lambda}(AB)=M_{\Lambda}(A)M_{\Lambda}(B)$ for any two
ideals $A,B$ of $R$. Given $I\in M_{\Lambda}(A)$ and $J\in M_{\Lambda}(B)$,
every entry of $IJ$ is a finite sum of the form $\sum a_{i}b_{j}$ for some
$a_{i}\in A$ and $b_{j}\in B$ and hence an element of $AB$. Thus $M_{\Lambda
}(A)M_{\Lambda}(B)$ $\subseteq M_{\Lambda}(AB)$.\\\\
To prove the reverse inclusion, first note that every element $M\in
M_{\Lambda}(AB)$ can be written as a finite sum of elements of the form
$abE_{ij}u$ where $a\in A,b\in B$ and $u$ is a local unit corresponding to the
finitely many non-zero entries of $M$ and also satisfying $ua=a=au$ and
$ub=b=bu$. Since $M_{\Lambda}(A)M_{\Lambda}(B)$ is an ideal of $M_{\Lambda
}(R)$, it is enough to show that $abE_{ij}u\in M_{\Lambda}(A)M_{\Lambda}(B)$,
for all $a\in A,b\in B$ and $i,j\in\Lambda$. Now $aE_{il}u\in M_{\Lambda}(A)$
and $bE_{lj}u\in M_{\Lambda}(B)$ and so%
\[
abE_{ij}u=abE_{il}uE_{lj}u=(aE_{il}u)(bE_{lj}u)\in M_{\Lambda}(A)M_{\Lambda
}(B)\text{.}%
\]
Thus $M_{\Lambda}(AB)=M_{\Lambda}(A)M_{\Lambda}(B)$ for any two ideals $A,B$
of $R$.
\end{proof}

Throughout the following, $L$ will denote the Leavitt path algebra
$L_{K}(E)$ of an arbitrary graph $E$ over a field $K$.

\section{Pr\"{u}fer-like properties satisfied in Leavitt path algebras }

In this section, we shall describe how the ideals of every Leavitt path
algebra $L$ satisfy two of the characterizing properties of a Pr\"{u}fer
domain mentioned in the introduction. In this connection, the graded ideals of
$L$ seem to be well-behaved and some extra efforts are needed in dealing with
the non-graded ideals of $L$. The following theorem consists of the results in
\cite{R-2} and \cite{R-1} which will be used in the sequel.

\begin{theorem}\label{reqthm}
\label{R-1-2} Let $I$ be a non-graded ideal of $%
L=L_{K}(E)$ with $H=I\cap E^{0}$ and $S=\{u\in B_{H}:~u^{H}\in I\}.$ Then

\begin{description}
\item[(i) \ ] (\cite[Theorem 4]{R-2}) $I=I(H,S)+\sum_{t\in T} \langle f_{t}(c_{t}) \rangle$ where $T$ is
some index set, for each $t\in T,$ $c_{t}$ is a cycle without exits in $%
E\backslash (H,S),$ $c_{t}^{0}\cap c_{s}^{0}=\emptyset $ for $t\neq s$ and $%
f_{t}(x)\in K[x]$ is a polynomial with its constant term non-zero and is of
the smallest degree such that $f_{t}(c_{t})\in I.$

\item[(ii)] (\cite[Lemma 3.6]{R-1}) $I(H,S)$ is the largest graded ideal
inside $I.$
\end{description}
\end{theorem}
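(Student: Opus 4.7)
The plan is to use the isomorphism $L/I(H,S)\cong L_K(E\setminus(H,S))$ to reduce both statements to the structure of ideals in the quotient graph, exploiting the standard bijection between graded ideals of a Leavitt path algebra and admissible pairs.

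For (ii), I would first observe $I(H,S)\subseteq I$ directly from the definitions $H=I\cap E^0$ and $S=\{v\in B_H:v^H\in I\}$. For maximality, suppose $J\subseteq I$ is any graded ideal; by the admissible-pair bijection, $J=I(H',S')$ with $H'=J\cap E^0\subseteq H$ and $S'=\{v\in B_{H'}:v^{H'}\in J\}$. For each $v\in S'$, I would rewrite
\[
v^H=v^{H'}+\sum_{e\in s^{-1}(v),\,r(e)\in H\setminus H'} ee^*,
\]
note that $v^{H'}\in J\subseteq I$ and that $ee^*=e\,r(e)\,e^*\in I$ because $r(e)\in H\subseteq I$, and conclude $v\in S$. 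Combined with $H'\subseteq H$ this gives $J\subseteq I(H,S)$.

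For (i), let $F=E\setminus(H,S)$ and let $\bar I$ be the image of $I$ in $L_K(F)$. By (ii) applied inside $L_K(F)$, the largest graded ideal contained in $\bar I$ is zero, so $\bar I\cap F^0=\emptyset$ and $\bar I$ contains none of the breaking-vertex generators. The Cuntz--Krieger graded uniqueness theorem then forces $\bar I$ to be supported entirely on cycles without exits in $F$: on any portion of $F$ satisfying condition (L), a nonzero ideal must meet the vertex set, contradicting $\bar I\cap F^0=\emptyset$. Let $\{c_t:t\in T\}$ enumerate the cycles without exits in $F$ that $\bar I$ meets nontrivially; distinct such cycles are automatically vertex-disjoint, since a shared vertex would provide one with an exit into the other.

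For each $c_t$, the two-sided ideal $M_t$ of $L_K(F)$ generated by $c_t^0$ is isomorphic to a matrix ring $M_{\Lambda_t}(K[x,x^{-1}])$, with $c_t$ corresponding to $x$. By Proposition~\ref{Ideals in Mtrix rings}(a), ideals of $M_t$ correspond to ideals of the localized PID $K[x,x^{-1}]$, so a non-graded ideal there is principal, generated by some polynomial $f_t(x)$; the non-zero constant term is forced because otherwise $c_t^*f_t(c_t)$ would yield a lower-degree element still in $\bar I$, contradicting the minimal choice of $f_t$. Lifting each $f_t(c_t)$ back to $L$ and adding $I(H,S)$ gives the claimed expression. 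The main obstacle I anticipate is the final bookkeeping: justifying that $\bar I$ decomposes cleanly as $\bigoplus_{t\in T}(\bar I\cap M_t)$, so that exactly one minimal polynomial per cycle suffices, and then verifying that $I(H,S)+\sum_t\langle f_t(c_t)\rangle$ exhausts $I$ rather than sitting strictly inside it. This requires the mutual orthogonality of the corners attached to distinct cycles without exits, together with a careful lift through the quotient $L\to L_K(F)$ to realize every element of $I$ modulo $I(H,S)$ by the chosen $f_t(c_t)$.
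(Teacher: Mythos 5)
First, a point of reference: the paper itself offers no proof of this theorem; it is quoted verbatim from \cite[Theorem 4]{R-2} and \cite[Lemma 3.6]{R-1}, so your attempt can only be measured against those sources. Your argument for (ii) is essentially the standard one and is sound: $I(H,S)\subseteq I$ by construction, and for a graded $J=I(H',S')\subseteq I$ one has $H'\subseteq H$ and, via the identity $v^{H}=v^{H'}+\sum_{e\in s^{-1}(v),\,r(e)\in H\setminus H'}ee^{*}$, each generator $v^{H'}$ of $J$ lands in $I(H,S)$. You should, however, dispose of the degenerate cases in which $v\in H$, or in which every edge leaving $v$ ends in $H$ so that $v\notin B_{H}$ and ``$v\in S$'' is not literally the right conclusion; both are easy (in the second, $v^{H'}\in I$ forces $v\in I\cap E^{0}=H$, a contradiction), but they must be said.

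Part (i) is where the genuine gap lies. After correctly reducing to an ideal $\bar{I}$ of $L_{K}(F)$, $F=E\backslash(H,S)$, with trivial graded part, you assert that graded uniqueness ``forces $\bar{I}$ to be supported entirely on cycles without exits'' because a nonzero ideal of a graph satisfying Condition (L) must contain a vertex. But $F$ does not satisfy Condition (L) (it contains the cycles $c_{t}$), and the quotient of $F$ by the hereditary saturated closure of $\bigcup_{t}c_{t}^{0}$ need not satisfy Condition (L) either, since an exit of a cycle can disappear into that closure; so the containment of $\bar{I}$ in the ideal $G$ generated by the no-exit cycle vertices does not follow from the uniqueness theorem as you invoke it. The argument actually needed (as in \cite{R-2}) works at the level of elements, via the Reduction Theorem: every nonzero element of $\bar{I}$ is equivalent, up to multiplication by $\mu^{*}$ and $\nu$, to a vertex or to a polynomial in a no-exit cycle, and one must then bootstrap this into the containment $\bar{I}\subseteq\bigoplus_{t}M_{t}$, the splitting $\bar{I}=\bigoplus_{t}(\bar{I}\cap M_{t})$, and the exhaustion $I=I(H,S)+\sum_{t}\langle f_{t}(c_{t})\rangle$. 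You explicitly defer exactly these three points to ``bookkeeping,'' but they are the entire content of \cite[Theorem 4]{R-2}; what you have is a correct reduction of the problem and a correct local analysis of each corner $M_{t}\cong M_{\Lambda_{t}}(K[x,x^{-1}])$ (including the normalization of $f_{t}$ to nonzero constant term), not a proof.
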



\bigskip

We shall denote $I(H,S)$ by $gr(I)$ and call it the graded part of the ideal $I$. \\

Before proving  the main theorem, we consider the case of graded ideals which
are easy to handle. A useful property of graded ideals of a Leavitt path
algebra $L$ (see \cite[Lemma 3.1]{Mulp-Ranga}) is that if $A$ is a graded ideal of
$L$, then for any ideal $B$, $AB=A\cap B$.

\begin{lemma}\label{lemma2}
Let $A,B,C$ be three ideals of a Leavitt path algebra $L$.
If one of them is a graded ideal then%
\[
A(B\cap C)=AB\cap AC\text{.}%
\]
\end{lemma}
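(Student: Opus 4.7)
The plan is to leverage the key property recalled in the excerpt: if $X$ is a graded ideal of $L$, then $XY = X \cap Y$ for any ideal $Y$ of $L$ (cited as \cite[Lemma 3.1]{Mulp-Ranga}). Combined with the commutativity of ideal multiplication in $L$ (noted in the introduction), this reduces the identity $A(B\cap C)=AB\cap AC$ to simple set-theoretic manipulation in each of the three cases.

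Case 1, when $A$ is graded: Then $AB = A\cap B$, $AC = A\cap C$, and $A(B\cap C) = A\cap(B\cap C)$. Both sides of the identity collapse to $A\cap B\cap C$, and we are done immediately.

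Case 2, when $B$ is graded (the case of $C$ graded being symmetric, since intersection and multiplication are commutative): Here $B$ satisfies $BX = B\cap X$ for every ideal $X$. Using this with $X=C$ gives $B\cap C = BC$, so
\[
A(B\cap C) = A(BC) = ABC.
\]
On the other hand, $AB = A\cap B$, so
\[
AB\cap AC = (A\cap B)\cap AC = B\cap AC,
\]
where the last equality uses $AC\subseteq A$. Applying the graded property of $B$ again with $X=AC$ yields $B\cap AC = B(AC) = BAC$, which equals $ABC$ by commutativity of ideal multiplication. Thus $AB\cap AC = ABC = A(B\cap C)$, as required.

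There is no serious obstacle here; the only thing to be careful about is invoking the correct form of the graded-ideal identity (namely $BX = B\cap X$ for \emph{all} ideals $X$, including the non-graded ones $AC$ and $C$), and then using commutativity of ideal multiplication freely to rearrange products. The genuine difficulty in the paper presumably arises in the next step, where the three ideals are allowed to be non-graded simultaneously, since then the reduction $X\cap Y = XY$ is no longer available and one must analyse the polynomial contributions $f_t(c_t)$ from Theorem \ref{reqthm}.
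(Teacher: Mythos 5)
Your proof is correct and follows essentially the same route as the paper: Case 1 collapses both sides to $A\cap B\cap C$ via the graded-ideal identity, and Case 2 reduces both sides to $ABC$ using $B\cap X = BX$ for the graded ideal $B$, the inclusion $AC\subseteq A$, and commutativity of ideal multiplication. The only cosmetic difference is that you meet in the middle at $ABC$ while the paper writes a single chain of equalities from $AB\cap AC$ to $A(B\cap C)$.
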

\begin{proof}
Case 1: Suppose $A$ is a graded ideal. Then 
by \cite[Lemma 3.1 (i)]{Mulp-Ranga}, 
\begin{eqnarray*}
A(B\cap C) &=&A\cap (B\cap C) \\
&=&(A\cap B)\cap (A\cap C) \\
&=&AB\cap AC
\end{eqnarray*}

Case 2: Suppose $B$ or $C$ is a graded ideal, say, $B$ is a graded ideal. Then 
\begin{eqnarray}
AB\cap AC &=&(A\cap B)\cap AC\text{ \ \ \ \ \ \ \ since }B\text{ is graded} \notag
\\
&=&B\cap AC\text{ \ \ \ \ \ \ \ \ \ \ \ \ \ \ \ since }AC\subseteq A \notag\\
&=&B(AC)\text{ \ \ \ \ \ \ \ \ \ \ \ \ \ \ \ \ since }B\text{ is graded} \notag \\
&=&ABC\text{ \ \ \ \ \ \ \ \ \ \ \ \ \ \ \ \ \ \ since } AB=BA \text{ by \cite[Theorem 3.4]{Mulp-Ranga}} \notag \\
&=&A(B\cap C)\text{ \ \ \ \ \ \ \ \ \ \ \ \ \ since }B\text{ is graded.} \notag
\end{eqnarray}
\end{proof}

Next, we consider the case when all ideals are non-graded in the next two lemmas. In the proofs, we shall be using Theorem 4.3 of \cite{Mulp-Ranga}, namely, $A\cap (B+C) = (A \cap B) + (A \cap C)$ for any three ideals $A$, $B$, $C$ in $L$. We shall also using the fact that for a graded ideal $I$, $IJ = I \cap J$ for any ideal $J$ in $L$.

\begin{lemma}
\label{NonGraded1}Let $A,B,$ and $C$ be non-graded ideals. If 
$A\subseteq gr(A)+gr(B\cap C)$ \ or ($B\subseteq gr(A)+gr(B\cap C)$ and 
$C\subseteq gr(A)+gr(B\cap C)$), then $A(B\cap C)=AB\cap AC$.
\end{lemma}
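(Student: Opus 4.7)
The containment $A(B\cap C)\subseteq AB\cap AC$ is automatic, so the plan is to establish only the reverse. The argument will rest on two facts already available: (i) the ideal lattice of $L$ is distributive (\cite[Theorem~4.3]{Mulp-Ranga}), and (ii) for any graded ideal $G$ and any ideal $J$ of $L$, $GJ=G\cap J$; in particular $GJ=J$ whenever $J\subseteq G$.

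For the first case I would use the hypothesis $A\subseteq gr(A)+gr(B\cap C)$ together with distributivity to write $A=gr(A)+D$, where $D:=A\cap gr(B\cap C)$. The key identity I plan to exploit is $D=A\cdot gr(B\cap C)$, which follows from fact~(ii) because $gr(B\cap C)$ is graded. Combined with associativity of ideal multiplication and the containments $gr(B\cap C)\subseteq B$, $gr(B\cap C)\subseteq C$, $gr(B\cap C)\subseteq B\cap C$, this will force
\[
DB\;=\;DC\;=\;D(B\cap C)\;=\;D.
\]
Consequently $AB=(gr(A)\cap B)+D$, $AC=(gr(A)\cap C)+D$, and $A(B\cap C)=(gr(A)\cap B\cap C)+D$. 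A distributive expansion of $AB\cap AC$ then absorbs every cross-term into $gr(A)\cap B\cap C$ or into $D$, yielding $AB\cap AC=(gr(A)\cap B\cap C)+D=A(B\cap C)$.

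For the second case I would set $E:=gr(B\cap C)$ and $D:=A\cap E=A\cdot E$. Since $E\subseteq B\cap C\subseteq B,C$, distributivity applied to the hypothesis $B,C\subseteq gr(A)+E$ gives $B=(B\cap gr(A))+E$, $C=(C\cap gr(A))+E$, and $B\cap C=(B\cap C\cap gr(A))+E$. Fact~(ii) applied to any $J\subseteq gr(A)$ yields $AJ=J$, so multiplying each of these decompositions by $A$ and using $AE=D$ again places $AB$, $AC$, and $A(B\cap C)$ into the same ``graded piece plus $D$'' shape as in the first case, and the same distributive expansion of $AB\cap AC$ closes the argument.

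The main obstacle I anticipate is pinning down the common ``tail'' $D$ that appears in each of $AB$, $AC$, and $A(B\cap C)$. Without the stated hypothesis, the non-graded polynomial parts of the ideals produced by Theorem~\ref{reqthm} need not sit inside a common graded ideal contained in $B\cap C$, and the cross-terms arising in $(gr(A)+D)(B\cap C)$ resist collapsing. It is precisely the fact that $D$ lies inside the graded ideal $gr(B\cap C)\subseteq B\cap C$ that forces $DB=DC=D(B\cap C)=D$, and this is what makes the distributive expansion of $AB\cap AC$ match $A(B\cap C)$ on the nose.
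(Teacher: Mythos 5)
Your proposal is correct and follows essentially the same route as the paper: decompose via the modular law using the hypothesis, use the identity $GJ=G\cap J$ for graded $G$ (so that the common tail $D=A\cap gr(B\cap C)=A\cdot gr(B\cap C)$ satisfies $DB=DC=D(B\cap C)=D$), and then expand $AB\cap AC$ with the distributivity of the ideal lattice. Your packaging of all three products into the uniform shape ``graded piece plus $D$'' is a slightly cleaner bookkeeping of the same computation the paper carries out term by term.
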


\begin{proof}
We want to show that $AB\cap AC\subseteq A(B\cap C)$ since the other
inclusion is always true.

Suppose that $A\subseteq gr(A)+gr(B\cap C).$ By the Modular Law, 
\begin{equation*}
A=\ A\cap (gr(A)+gr(B\cap C))=gr(A)+(A\cap gr(B\cap C)).
\end{equation*}
Then%
\begin{eqnarray*}
AB &=&(gr(A)+(A\cap gr(B\cap C)))B \\
&=&gr(A)B+(A\cap gr(B\cap C))B \\
&=&gr(A)B+(A\cap gr(B\cap C))\text{\ \ \ \ \ \ \ \ by \cite[Lemma 3.2]{Mulp-Ranga}}
\end{eqnarray*}%
and similarly, $AC=gr(A)C+(A\cap gr(B\cap C)).$ Hence 
\begin{equation*}
AB\cap AC=(gr(A)B+(A\cap gr(B\cap C)))\cap (gr(A)C+(A\cap gr(B\cap C)))
\end{equation*}%
By \cite[Theorem 4.3]{Mulp-Ranga}, 
\begin{eqnarray*}
AB\cap AC &=&[(gr(A)B+(A\cap gr(B\cap C)))\cap gr(A)C] \\
&&+[(gr(A)B+(A\cap gr(B\cap C)))\cap (A\cap gr(B\cap C)))]
\end{eqnarray*}%
Now, by \cite[Theorem 4.3, Lemma 3.1 (i)]{Mulp-Ranga}, 
\begin{eqnarray*}
AB\cap AC &=&gr(A)\cap (B\cap C)+gr(A)\cap gr(B\cap C)\cap C \\
&&+A\cap gr(B\cap C) \\
&=&gr(A)(B\cap C)+gr(A)gr(B\cap C) \\
&&+Agr(B\cap C) \text{ \ \ \ \ \ \ \ \ \ \ \ \ \ by using \cite[Lemma 3.1 (i)]{Mulp-Ranga} }\\
&\subseteq &A(B\cap C)
\end{eqnarray*}%
Now, suppose that $B\subseteq gr(A)+gr(B\cap C)$ and $C\subseteq
gr(A)+gr(B\cap C).$ Then, by the Modular Law, $B=$\ $B\cap (gr(A)+gr(B\cap
C))=gr(B\cap C)+(gr(A)\cap B)$ and similarly, $C=gr(B\cap C)+(gr(A)\cap C).$

Hence,%
\begin{eqnarray*}
AB &=&Agr(B\cap C)+A(gr(A)\cap B) \\
&=&Agr(B\cap C)+(gr(A)\cap B)\text{\ \ \ \ \  \ \ \ \ \ \ \ \ \ \ \ \ \ by \cite[Lemma 3.2]{Mulp-Ranga}}
\end{eqnarray*}%
and similarly, $AC=Agr(B\cap C)+(gr(A)\cap C).$\\

\noindent
Therefore, by using \cite[Theorem 4.3, Lemma 3.1 (i)]{Mulp-Ranga}, 
\begin{eqnarray*}
AB\cap AC &=&[ Agr(B\cap C)+(gr(A)\cap B )] \cap [ Agr(B\cap C)+(gr(A)\cap C )] \\
&=&Agr(B\cap C)+gr(A)gr(B\cap C)+gr(A)gr(B\cap C)+gr(A)(B\cap C) \\
&\subseteq &A(B\cap C)
\end{eqnarray*}
\end{proof}

In proving the next lemma, we shall be using the easy-to-see statement that,
for any two ideals $B,C$ of $L$,  $gr(B\cap C)=gr(B)\cap gr(C)$.
\bigskip

\begin{lemma}
\label{NonGraded2}Let $A,B,$ and $C$ be non-graded ideals of $L$. If $%
A\nsubseteq gr(A)+gr(B\cap C)$ \ and ($B\nsubseteq gr(A)+gr(B\cap C)$ or $%
C\nsubseteq gr(A)+gr(B\cap C)$), then $A(B\cap C)=AB\cap AC$.
\end{lemma}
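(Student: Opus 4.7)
The plan is to exploit the structure theorem (Theorem \ref{reqthm}) and reduce the identity to standard polynomial arithmetic in $K[x,x^{-1}]$. First, I would apply Theorem \ref{reqthm}(i) to decompose each of $A,B,C$ as
\[
A = gr(A) + \sum_{c\in \mathcal{C}_A}\langle f^A_c(c)\rangle, \qquad B = gr(B) + \sum_{c\in \mathcal{C}_B}\langle f^B_c(c)\rangle, \qquad C = gr(C) + \sum_{c\in \mathcal{C}_C}\langle f^C_c(c)\rangle,
\]
where the $c$'s are exitless cycles in the appropriate quotient graphs (with pairwise disjoint vertex sets within each decomposition) and each polynomial has nonzero constant term. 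The inclusion $A(B\cap C)\subseteq AB\cap AC$ is automatic, so the content lies in the reverse inclusion.

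Next, I would translate the two non-containment hypotheses into statements about shared cycles. The assumption $A\nsubseteq gr(A)+gr(B\cap C)$ forces at least one polynomial generator $f^A_c(c)$ of $A$ to lie outside $gr(B)\cap gr(C)=gr(B\cap C)$; since a term of $f^A_c(c)$ is a nonzero scalar multiple of $v=s(c)$, this means $v\notin gr(B)\cap gr(C)$, so at least one of $B,C$ also has genuine (non-graded) content along $c$. Combined with the second hypothesis, I would extract a common cycle $c$ on which $A$ and (say) $B$ both contribute polynomial generators, with $C$ contributing either a polynomial generator or a graded piece.

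At each such cycle $c$, the local picture collapses to a principal-ideal computation. Because $c$ is exitless in the relevant quotient, the component of $L$ determined by $c$ is isomorphic to a matrix algebra $M_\Lambda(K[x,x^{-1}])$; by Proposition \ref{Ideals in Mtrix rings}, the ideals of this component are in bijection (preserving products and intersections) with ideals of the PID $K[x,x^{-1}]$. There the identity reduces to $\operatorname{lcm}(f^A f^B, f^A f^C)=f^A\cdot\operatorname{lcm}(f^B,f^C)$, which is immediate. To globalize, I would use the disjointness of vertex sets of distinct exitless cycles together with the distributive law \cite[Theorem 4.3]{Mulp-Ranga} and the absorption property $IJ=I\cap J$ for graded $I$, invoking Lemma \ref{lemma2} to handle cycles along which some of $A,B,C$ contribute only via their graded parts.

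The main obstacle will be the bookkeeping in Step~3: the decomposition cycles for $A,B,C$ are given relative to the \emph{different} admissible pairs $(H_A,S_A)$, $(H_B,S_B)$, $(H_C,S_C)$, so ``shared cycle'' needs to be interpreted carefully, and the graded parts $gr(A),gr(B),gr(C)$ can interact nontrivially with the polynomial parts at a cycle via terms that match the graded skeleton. The saving grace is that the product of ideals in $L$ is commutative (\cite[Theorem 3.4]{Mulp-Ranga}), cycles without exits have disjoint vertex sets, and graded ideals absorb intersections; together these should make the cycle-by-cycle reduction go through and allow the sum over cycles plus the graded-on-graded contributions (covered by Lemma \ref{lemma2}) to yield the global equality.
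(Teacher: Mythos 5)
Your plan assembles the right ingredients---the decomposition of Theorem \ref{reqthm}(i), the reduction of exitless-cycle components to $M_{\Lambda}(K[x,x^{-1}])$ via Proposition \ref{Ideals in Mtrix rings}, and the final appeal to ideal arithmetic in the PID $K[x,x^{-1}]$---and these are indeed the endgame of the paper's argument. But the step you yourself flag as ``the main obstacle'' is precisely where the proof lives, and your proposal does not resolve it. The cycles appearing in the decompositions of $A$, $B$, $C$ are exitless in three \emph{different} quotient graphs $E\backslash(H_1,S_1)$, $E\backslash(H_2,S_2)$, $E\backslash(H_3,S_3)$, so ``cycle-by-cycle'' comparison across the three ideals is not well defined as stated; moreover your claim that the hypotheses force a \emph{common} cycle on which $A$ and $B$ both contribute polynomial generators does not follow. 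For instance, $B\nsubseteq gr(A)+gr(B\cap C)$ only says the image of $B$ in $L/(gr(A)+gr(B\cap C))$ is nonzero; that image may still come entirely from $gr(B)$, so $B$ need not have surviving non-graded content at all, let alone on a cycle shared with $A$.

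The missing idea is the passage to the single quotient $\overline{L}=L/I$ with $I=gr(A)+gr(B\cap C)$. In $\overline{L}$ the image $\overline{A}$ loses its graded part entirely and sits inside the graded ideal $G$ generated by the vertices of the relevant exitless cycles, with $G\cong\bigoplus_i M_{\Lambda_i}(K[x,x^{-1}])$ and $\overline{A}G=\overline{A}$; this is what makes all three ideals commensurable and lets the distributive law \cite[Theorem 4.3]{Mulp-Ranga} plus Proposition \ref{Ideals in Mtrix rings} reduce the identity to the Pr\"{u}fer property of $K[x,x^{-1}]$, with the graded cross-terms handled by $IJ=I\cap J$ rather than by any matching of cycles. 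Even granting the equality $\overline{A}(\overline{B}\cap\overline{C})=\overline{A}\,\overline{B}\cap\overline{A}\,\overline{C}$ in the quotient, you still need the lift-back: this only yields $AB\cap AC\subseteq A(B\cap C)+I$, and one must intersect with $A\cap B\cap C$ and apply the modular law together with $gr(A)(B\cap C)+A\,gr(B\cap C)\subseteq A(B\cap C)$ to land back in $A(B\cap C)$. Neither the quotient construction nor this recovery step appears in your outline, so as written the argument has a genuine gap.
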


\begin{proof}
Without loss of generality, we may assume $A\nsubseteq gr(A)+gr(B\cap C)$ and $B\nsubseteq
gr(A)+gr(B\cap C)$. Let $I=I(H,S)=gr(A)+gr(B\cap C)$. By Theorem \ref{reqthm} (i), 
\begin{eqnarray*}
A &=&I(H_{1},S_{1})+ \sum_{i\in X}\left\langle f_{i}(c_{i})\right\rangle
,~~B=I(H_{2},S_{2})+ \sum_{j\in Y}\left\langle g_{j}(c_{j})\right\rangle ,~
\text{\ and } \\
C &=&I(H_{3},S_{3})+ \sum_{k\in Z}\left\langle h_{k}(c_{k})\right\rangle
\end{eqnarray*}%
where $X,Y,$ and $Z$ are some index sets, $I(H_{1},S_{1})=gr(A),$ $%
I(H_{2},S_{2})=gr(B),$ $I(H_{3},S_{3})=gr(C)$ and for all $i\in X,~j\in Y,$
and $k\in Z,$ $f_{i}(x),g_{j}(x),h_{k}(x)\in K[x]$ and $c_{i},c_{j},$ and 
$c_{k}$ are cycles without exits in $E\backslash (H_{1},S_{1}),~E\backslash
(H_{2},S_{2}),$ and $E\backslash (H_{3},S_{3}),$ respectively. In $\overline{
L}=L/I\cong L_{K}(E\backslash (H,S)),$ $\overline{A}=(A+I)/I$ is an
epimorphic image of $A/gr(A)$ and let $~\overline{B}=(B+I)/I$ and $\overline{C}
=(C+I)/I.$ Hence, 
\begin{eqnarray*}
\overline{A} &=&\sum\limits_{i\in X^{\prime }}{}\left\langle
f_{i}(c_{i})\right\rangle ,~\overline{B}=(gr(A)+gr(B)+I)/I+\left[ \sum_{j\in
Y^{\prime }}{}\left\langle g_{j}(c_{j})\right\rangle +I\right] /I,\text{ and 
} \\
\overline{C} &=&(gr(A)+gr(C)+ I)/I+\left[ \sum_{k\in Z^{\prime }}{}\left\langle
h_{k}(c_{k})\right\rangle +I\right] /I,
\end{eqnarray*}%
where $X'$, $Y'$, $Z'$ are subsets of the sets $X$, $Y$, $Z$ respectively.

\bigskip
\noindent 
For the sake of convenience, we shall write \\

$~\overline{B}=(gr(A)+gr(B)+I)/I+\left[ \sum_2+I\right]/I, \textit{ where } \sum_2=\sum_{j\in
Y^{\prime }}{} \langle g_{j}(c_{j}) \rangle \text{ and } $

\bigskip

$\overline{C} =(gr(A)+gr(C)+I)/I+\left[ \sum_3 +I\right]/I,   \textit{ where } \sum_3=\sum_{k\in Z^{\prime }}{}\langle
h_{k}(c_{k}) \rangle $.\\\\

\bigskip

$ \qquad \qquad \qquad \overline{B}\cap \overline{C} =\left\{ (gr(A)+gr(B)+I)/I+\left[\sum_2 +I\right] /I\right\} $

\bigskip

$\qquad \qquad \qquad \qquad \qquad \cap \left\{ (gr(A)+gr(C)+I)/I+\left[ \sum_3 +I \right] /I \right\} $\\

\bigskip
$\qquad \qquad \qquad \qquad \qquad =(gr(A)+gr(B)+I)/I\cap \left[ \sum_3 +I \right]  /I $\\

\bigskip
$\qquad \qquad \qquad  \qquad \qquad +(gr(A)+gr(C)+I)/I\cap \left[ \sum_2 +I\right] /I $\\

\bigskip
$\qquad \qquad \qquad  \qquad \qquad +\left[ \sum_2 +I \right] /I\cap \left[ \sum_3 +I\right] /I,$\\

\noindent 
noting that $[(gr(A)+gr(B)+I)/I] \cap [(gr(A)+gr(C)+I)/I$ simplifies to $\overline{0}$.

\bigskip

\bigskip
$\qquad \qquad \bar{A}(\bar{B}\cap\bar{C})=\left[  \bar{A}(gr(A)+gr(B)+I)/I\right]
\cap\left[  \bar{A}\sum_{3}+I\right]  /I$

\bigskip
$\qquad \qquad \qquad  \qquad \qquad +\left[  \bar{A}(gr(A)+gr(C)+I)/I\right]  \cap\left[  \bar{A}\sum_{2}+I\right]
/I$

\bigskip
$\qquad \qquad \qquad  \qquad \qquad + \bar{A}\left( \left[ \left( \sum_3 + I \right)/I \right] \cap \left[\left(\sum_2 + I\right)/I \right] \right)   \quad \qquad (1)$\\
On the other hand,

\bigskip
$\qquad \qquad \qquad  \bar{A}\bar{B}=\left[  \bar{A}(gr(A)+gr(B)+I)/I\right]  +\left[  \bar
{A}\sum_{2}+I\right]  /I$

\bigskip

\noindent
and

\bigskip

$\qquad \qquad \qquad  \bar{A}\bar{C}=\left[  \bar{A}(gr(A)+gr(C)+I)/I\right]  +\left[  \bar{A}\sum
_{3}+I\right]  /I$

\bigskip

\noindent
Hence, by \cite[Theorem 4.3]{Mulp-Ranga}%

\bigskip

$\quad  \overline{A}~\overline{B}\cap \overline{A}~\overline{C} \> =\> \underset{=0\text{\ since }(gr(B)+gr(A)+I)/I\cap (gr(C)+gr(A)+I)/I=\overline{0}}{\underbrace{%
\left[  \bar{A}(gr(A)+gr(B)+I)/I\right]  \cap \left[  \bar{A}(gr(A)+gr(C)+I)/I\right] }}$

\bigskip

$\qquad \qquad \qquad  \quad  +\bar{A}(gr(A)+gr(B)+I)/I\cap \left[  \bar{A}\sum_{3}+I\right]  /I $

\bigskip

$\qquad \qquad \qquad  \quad  +  \left[  \bar{A}\sum_{2}+I\right]  /I   \cap \bar{A}(gr(A)+gr(C)+I)/I $
 
 \bigskip

$\qquad \qquad \qquad  \quad +\left[  \bar{A}\sum_{2}+I\right]  /I  \cap \left[  \bar{A}\sum_{3}+I\right]  /I \qquad \quad (2) $

\bigskip

\bigskip

\noindent
We wish to show that $\bar{A} \bar{B} \cap \bar{A} \bar{C} = \bar{A}( \bar{B} \cap \bar{C})$. Now comparing (1) and (2), all we need is to show that

\bigskip
\noindent
 $\quad \qquad  \bar{A} \left(\left[\left(\sum_2 + I\right)/I\right] \cap \left[\left(\sum_3+I\right)/I\right]\right) = \left[\bar{A} \left(\sum_2 + I\right)/I \right] \cap \left[\bar{A} \left(\sum_3+ I\right)/I\right].$
 
 \bigskip
 \noindent
Let $G$ be the graded ideal of $\bar{L}$ generated by the vertices on all the cycles $c_i$ where $i$ belongs to $X'$. Since the $c_i$ are cycles without exits in $E \backslash (H,S)$, $G$ is isomorphic to the ring direct sum $\bigoplus_{i\in X^{\prime
}}M_{\Lambda _{i}}(K[x,x^{-1}])$ where the $\Lambda _{i}$ are suitable index
sets by \cite[Theorem 2.7.3]{AAS}. Note that $\bar{A}$ is contained in $G$ and so $\bar{A} G = \bar{A}$, by \cite[Lemma 3.2]{Mulp-Ranga}. Then 

\bigskip
$\bar{A}([(\sum_2 + I)/I] \cap [(\sum_3 +I)/I]) =  \bar{A} G([\sum_2 +I)/I ] \cap [(\sum_3 +I)/I]) $

\bigskip 
$ \qquad \qquad \qquad \qquad   = \bar {A} (G[(\sum_2 + I)/I] \cap [G(\sum_3 +I)/I]) \text{ by Lemma 3} $

\bigskip 
$ \qquad \qquad \qquad \qquad   = \bar{A}([G \cap (\sum_2  + I)/I] \cap [G \cap (\sum_3 + I)/I]) \text{ as $G$ is graded.}$\\

Now all three ideals in the preceding equation are ideals of $G$ and G is isomorphic to the ring direct
sum  $\bigoplus_{i\in X^{\prime }}M_{\Lambda _{i}}(K[x,x^{-1}])$. Moreover, by Proposition 1, there is an isomorphism between the ideal lattices of $M_{\Lambda_i}(K[x, x^{-1}])$ and $K[x, x^{-1}]$ which preserves multiplication. Since $K[x,x^{-1}]$ is a Pr\"{u}fer domain, $H(K \cap L) = HK \cap HL$ holds for any three ideals of $K[x, x^{-1}]$ and consequently, any three ideals of $G$ also satisfy this property. We observe that 

\bigskip
\noindent
$\bar{A}([G \cap (\sum_2 + I)/I] \cap [G \cap (\sum_3 + I)/I])  = (\bar{A}[G \cap (\sum_2 + I)/I] \cap \bar{A}[G \cap (\sum_3 + I)/I]) $

\bigskip 
$ \qquad \qquad \qquad \qquad \qquad  = \bar{A}G(\sum_2 + I)/ I \cap \bar{A} G(\sum_3 + I)/I \text{ as $G$ is graded} $

\bigskip 
$ \qquad \qquad \qquad \qquad \qquad = \bar{A}(\sum_2 + I)/I \cap \bar{A}( \sum_3 + I)/I, \text{ by \cite[Lemma 3.2]{Mulp-Ranga}}.$

We thus conclude that $\bar{A}\bar{B} \cap \bar{A} \bar{C} = \bar{A}(\bar{B} \cap \bar{C})$.  Then 
$AB\cap AC=A(B\cap C)+(gr(A)+gr(B\cap C))$.  Now $A\cap B\cap C$ contains
both $AB\cap AC$ and $A(B\cap C)$ and so using modular law, we have%
\begin{eqnarray*}
AB\cap AC &=&(AB\cap AC)\cap (A\cap B\cap C) \\
&=&[A(B\cap C)+(gr(A)+gr(B\cap C))]\cap (A\cap B\cap C) \\
&=&A(B\cap C)+\underset{gr(A)\cap (A\cap B\cap C)+gr(B\cap C)\cap (A\cap
B\cap C)}{\underbrace{(gr(A)+gr(B\cap C))\cap (A\cap B\cap C)}} \\
&=&A(B\cap C)+gr(A)(B\cap C)+Agr(B\cap C) \\
&\subseteq &A(B\cap C).
\end{eqnarray*}%
Since $A(B\cap C)\subseteq AB\cap AC$ is always true, we get $A(B\cap
C)=AB\cap AC$.
\end{proof}

\noindent
Hence, we can state the main result.  
\bigskip

\begin{theorem}\label{MainThm}
If $A,B,C$ are any ideals of a Leavitt path algebra $L$ of an
arbitrary graph $E$, then 
\begin{equation*}
A(B\cap C)=AB\cap AC.
\end{equation*}
\end{theorem}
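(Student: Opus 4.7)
The plan is to prove the theorem as a short case analysis that gathers together Lemma \ref{lemma2} and the two non-graded lemmas (\ref{NonGraded1} and \ref{NonGraded2}) established just above. All the real work has been done in those lemmas; the task now is merely to verify that the three hypotheses jointly cover every possible configuration of the triple $(A,B,C)$.

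First I would dispense with the graded case. If at least one of $A$, $B$, $C$ is a graded ideal of $L$, then Lemma \ref{lemma2} directly yields $A(B\cap C)=AB\cap AC$, and there is nothing more to show. So I may assume from now on that $A$, $B$, and $C$ are all non-graded.

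Next, set $J:=gr(A)+gr(B\cap C)$ (the same ideal that appears as $I$ in the statement of Lemma \ref{NonGraded2}). I would split the analysis according to the containment status of $A$, $B$, $C$ in $J$. If $A\subseteq J$, then the first hypothesis of Lemma \ref{NonGraded1} is met, so the equality holds. If $A\not\subseteq J$ but both $B\subseteq J$ and $C\subseteq J$, then the second hypothesis of Lemma \ref{NonGraded1} is met, and again the equality holds. Finally, if $A\not\subseteq J$ and at least one of $B$ or $C$ is not contained in $J$, then the hypotheses of Lemma \ref{NonGraded2} are satisfied, and the equality follows. These three subcases are exhaustive: once $A\not\subseteq J$, either both $B$ and $C$ lie in $J$ (handled by Lemma \ref{NonGraded1}) or at least one of them does not (handled by Lemma \ref{NonGraded2}).

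Since each of the exhaustive cases returns $A(B\cap C)=AB\cap AC$, the theorem follows. There is essentially no obstacle at this stage; the genuinely delicate step was the argument in Lemma \ref{NonGraded2}, where the reduction to a matrix ring over $K[x,x^{-1}]$ and the use of the Pr\"{u}fer property of $K[x,x^{-1}]$ via Proposition \ref{Ideals in Mtrix rings} were required. The main theorem itself is simply the bookkeeping that stitches the three lemmas together.
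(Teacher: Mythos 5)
Your proposal is correct and matches the paper's proof, which likewise derives the theorem by combining Lemma \ref{lemma2}, Lemma \ref{NonGraded1}, and Lemma \ref{NonGraded2}; your explicit verification that the three hypotheses exhaust all configurations of $(A,B,C)$ relative to $gr(A)+gr(B\cap C)$ is exactly the bookkeeping the paper leaves implicit.
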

\begin{proof}
By using Lemmas \ref{lemma2}, \ref{NonGraded1} and \ref{NonGraded2}, the result follows.
\end{proof}

\bigskip

In elementary number theory, it is well-known that for any two positive
integers $a,b$, we have $\gcd(a,b) \cdot \text{lcm}(a,b) = ab$. This property can be stated for ideals as: for any ideals 
$A,B$, $(A+B)(A \cap B) = AB$. This equality holds for ideals in a Dedekind domain. If this equality holds for 
finitely generated ideals $A,B$, then the integral domain is a Pr\"{u}fer domain. 
The next theorem shows that any Leavitt path algebra satisfies this characterizing property. 
We shall prove this by using Theorem \ref{MainThm}.


\begin{theorem}
\label{5-6}For any two ideals $A$, $B$ of a Leavitt path algebra $L$,  $$(A+B)(A\cap B)=AB.$$
\end{theorem}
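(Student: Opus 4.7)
The plan is to deduce Theorem \ref{5-6} as a short corollary of Theorem \ref{MainThm}, the commutativity of ideal multiplication in $L$ (cited from \cite[Theorem 3.4]{Mulp-Ranga}), and the distributivity of multiplication over sums that holds in any ring. No case analysis on graded vs.\ non-graded ideals should be needed, since the heavy lifting was already done in Theorem \ref{MainThm}.

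First I would establish the inclusion $(A+B)(A\cap B)\subseteq AB$. Expanding the product by right distributivity gives
\[
(A+B)(A\cap B)=A(A\cap B)+B(A\cap B).
\]
Since $A\cap B\subseteq B$, the first summand is contained in $AB$, and since $A\cap B\subseteq A$ together with $BA=AB$, the second summand is also contained in $AB$. Adding gives the desired inclusion.

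Next I would obtain the reverse inclusion $AB\subseteq (A+B)(A\cap B)$ by invoking Theorem \ref{MainThm} with the ideal $A+B$ playing the role of the distinguished factor:
\[
(A+B)(A\cap B)=(A+B)A\cap(A+B)B.
\]
Using left distributivity and $BA=AB$, we have $(A+B)A=A^{2}+AB\supseteq AB$ and $(A+B)B=AB+B^{2}\supseteq AB$. Therefore $AB$ is contained in each factor of the intersection, so $AB\subseteq (A+B)A\cap(A+B)B=(A+B)(A\cap B)$.

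Combining the two inclusions yields $(A+B)(A\cap B)=AB$. The only substantive ingredient is Theorem \ref{MainThm}; there is no genuine obstacle here, as commutativity of ideal multiplication in $L$ and the ordinary distributive law for ring multiplication over ideal sums handle everything else.
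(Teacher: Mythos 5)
Your proof is correct and follows essentially the same route as the paper: both directions hinge on applying Theorem \ref{MainThm} to write $(A+B)(A\cap B)=(A+B)A\cap(A+B)B$ for the inclusion $AB\subseteq(A+B)(A\cap B)$, and on distributivity plus the commutativity $AB=BA$ from \cite[Theorem 3.4]{Mulp-Ranga} for the reverse inclusion. The only cosmetic difference is that you expand $(A+B)A=A^{2}+AB$ before observing it contains $AB$, whereas the paper notes directly that $(A+B)A\supseteq BA=AB$.
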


\begin{proof}
Now, by Theorem \ref{MainThm} and  \cite[Theorem 3.4]{Mulp-Ranga},
 \begin{eqnarray*}
(A+B)(A\cap B) &=&(A+B)A\cap (A+B)B \\
&\supseteq &BA\cap AB=AB\cap AB=AB.
\end{eqnarray*}\noindent
The converse inclusion is always true since 
$$(A+B)(A\cap B) =A(A\cap B)+B(A\cap B) \qquad \qquad \qquad $$ 
$$\qquad \qquad \qquad \qquad \quad \quad \> \subseteq  AB+BA=AB+AB=AB \qquad  \text{ by \cite[Theorem 3.4]{Mulp-Ranga}}.$$
Thus we obtain $(A+B)(A\cap B)=AB.$
\end{proof}



\section{Almost Dedekind domains and Leavitt path algebras}
As noted in the Introduction, an almost Dedekind domain $D$ is a Pr\"{u}fer
domain with the property that all its localizations with respect to maximal ideals
are noetherian valuation domains. In this section, we investigate whether a
Leavitt path algebra $L$ satisfies any of the other characterizing properties of
almost Dedekind domains. Recall that  the \textit{radical} $\sqrt I$ of an
ideal $I$ in a ring $R$ is the intersection of all the prime ideals of $R$
containing $I$. It is known (see \cite{Larsen}) that an integral domain $D$ is an
almost Dedekind domain if and only if every non-zero ideal $I$ with its radical $\sqrt I$ a prime
ideal is a power of a prime ideal. It turns out that every Leavitt path
algebra $L$ satisfies this property. As a corollary, we show that
if $P$ is a non-zero prime ideal in a Leavitt path algebra $L$, then all the
$P$-primary ideals of $L$ form a well-ordered chain under set inclusion and
that there are no ideals of $L$ strictly between $P^{n}$ and $P^{n+1}$,
properties satisfied by Dedekind domains. We also consider the property of non-zero
ideals being cancellative, an important property that characterizes almost
Dedekind domains among integral domains. Recall that a non-zero ideal $A$ in a
ring $R$ is \textit{cancellative} if, for any two ideals $B,C$ of $R$, $AB=AC$
implies that $B=C$. Examples indicate that not all Leavitt path algebras have
this property. We show that, in a Leavitt path algebra $L:=L_{K}(E)$, every
non-zero ideal of $L$ is cancellative if and only if either (a) there is a
cycle $c$ without exits in $E$ based at a vertex $v$ such that $u\geq v$ for
every $u\in E^{0}$ and $\mathbf{H}_{E}\mathbf{=\{}H\mathbf{\}}$ where $H$ is
the hereditary saturated closure of $\{v\}$ and $B_{H}=\emptyset$, or (b) $E$
satisfies Condition (K), $|\mathbf{H}_{E}|\leq2$, for any two $X,Y\in
\mathbf{H}_{E}$ with $X\neq Y$, $X\cap Y=\emptyset$ and, for each
$H\in\mathbf{H}_{E}$, $B_{H}$ is empty and $H$ is the saturated closure of
each $u\in H$. Here $\mathbf{H}_{E}$ denotes the set of all non-empty proper
hereditary saturated subsets of vertices in the graph $E$. 
Equivalent conditions on $L$ are, either (a) $L$ contains a graded ideal $M$ which
contains every proper ideal of $L$ and $M\cong M_{\Lambda}(K[x,x^{-1}])$ where
$\Lambda$ is an arbitrary finite or infinite index set or (b) $L$ has at most
two non-zero ideals each of which is graded and is a principal ideal.

We begin by showing that every Leavitt path algebra satisfies the first
mentioned property of almost Dedekind domains.

\begin{theorem} \label{aDD} Let $I$ be a non-zero ideal of $L$. If its radical $\sqrt I$ is a prime ideal, say $P$, 
then $I=P^n$, for some $n \geq 1$.
\end{theorem}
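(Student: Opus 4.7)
The proof strategy splits based on whether $I$ is a graded ideal of $L$. If $I$ is graded, then $L/I \cong L_K(E\setminus(H,S))$ is again a Leavitt path algebra, and every Leavitt path algebra is semiprime. Thus the prime radical $\sqrt{I}/I$ of $L/I$ is zero, forcing $\sqrt{I}=I=P$, so trivially $I=P^{1}$ and this case is complete.

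Now suppose $I$ is non-graded. By Theorem \ref{reqthm}(i) one writes $I = I(H,S) + \sum_{t\in T}\langle f_t(c_t)\rangle$ with $T\neq\emptyset$, the $c_t$ disjoint cycles without exits in $E\setminus(H,S)$, and each $f_t\in K[x]$ of smallest degree with $f_t(0)\neq 0$. Pass to $\bar L = L/I(H,S)\cong L_K(E\setminus(H,S))$ and set $\bar I = I/I(H,S)$ and $\bar P = P/I(H,S)$; since $\bar I\neq 0$, $\bar P$ is a non-zero prime ideal of $\bar L$ equal to $\sqrt{\bar I}$. Let $G$ be the graded ideal of $\bar L$ generated by $\bigcup_t c_t^0$. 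Because each $c_t$ has no exits in $\bar L$, Theorem 2.7.3 of \cite{AAS} gives $G\cong \bigoplus_{t\in T}M_{\Lambda_t}(K[x,x^{-1}])$ for suitable index sets $\Lambda_t$, and $\bar I\subseteq G$. Under the correspondence of Proposition \ref{Ideals in Mtrix rings}, $\bar I$ matches $\bigoplus_{t\in T}(f_t(x))$, and ideal multiplication is preserved.

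The next step is to show that $T$ must be a singleton. Suppose $|T|\geq 2$: picking irreducible factors $p_i\mid f_{t_i}$ for two distinct indices $t_1,t_2\in T$, one builds two distinct primes of $\bar L$ above $\bar I$, each of the form $\langle \bigcup_{t\neq t_i}c_t^0\rangle + (\text{suitable graded prime}) + \langle p_i(c_{t_i})\rangle$, whose intersection is strictly smaller than either and hence cannot be prime, contradicting the primality of $\sqrt{\bar I}$. So $T=\{t_0\}$; writing $c=c_{t_0}$ and $f=f_{t_0}$, the matrix-ring dictionary forces $\sqrt{(f(x))}$ to be prime in $K[x,x^{-1}]$, and unique factorization in this PID yields $f(x)=p(x)^n$ for some irreducible $p$ with $p(0)\neq 0$. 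Then $\bar P = \langle p(c)\rangle$, and transporting the identity $(p)^n=(p^n)$ via Proposition \ref{Ideals in Mtrix rings} gives $\bar P^n = \langle p(c)^n\rangle = \langle f(c)\rangle = \bar I$. Lifting back to $L$ via commutativity of ideal multiplication together with the identity $I(H,S)\cdot J = I(H,S)\cap J\subseteq I(H,S)$, valid since $I(H,S)$ is graded, yields
\[
P^n \;=\; \bigl(I(H,S)+\langle p(c)\rangle\bigr)^n \;=\; I(H,S) + \langle p(c)^n\rangle \;=\; I(H,S)+\langle f(c)\rangle \;=\; I.
\]

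The main obstacle will be the $|T|=1$ reduction: one must concretely produce enough pairwise distinct prime ideals of $\bar L$ lying above $\bar I$ to contradict the primality of $\sqrt{\bar I}$, which requires a careful accounting of both graded and non-graded primes of $\bar L$ and their compatibility with the direct-sum decomposition $G\cong \bigoplus_{t\in T}M_{\Lambda_t}(K[x,x^{-1}])$, as well as verifying that the graded part of $\bar P$ vanishes so that $\bar P$ itself has the pure form $\langle p(c)\rangle$.
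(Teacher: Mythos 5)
Your overall architecture (graded case trivial; non-graded case reduced to $K[x,x^{-1}]$ via the matrix-ring dictionary of Proposition \ref{Ideals in Mtrix rings}) matches the paper's, and your graded case and final computation $P^n = I(H,B_H)+\langle p^n(c)\rangle$ are fine. But the two steps you yourself flag as ``the main obstacle'' --- that the index set $T$ is a singleton, and that $\bar P$ has the pure form $\langle p(c)\rangle$ with vanishing graded part --- are exactly the heart of the proof, and they are not established. Moreover, the mechanism you sketch for the $|T|\geq 2$ contradiction does not work as stated: producing two incomparable primes $Q_1,Q_2$ above $\bar I$ whose intersection is not prime does not contradict the primality of $\sqrt{\bar I}$, because $\sqrt{\bar I}$ is the intersection of \emph{all} primes over $\bar I$ and a prime can perfectly well be contained in two incomparable primes. (The argument can be repaired --- e.g.\ for $t_1\neq t_2$ the block ideals satisfy $\langle c_{t_1}^0\rangle\cdot\langle c_{t_2}^0\rangle=0\subseteq P$, so primality of $P$ forces $c_{t_1}^0\subseteq P$ or $c_{t_2}^0\subseteq P$, which one then rules out by exhibiting primes over $\bar I$ avoiding each $c_{t_i}^0$ --- but none of this is in your write-up.)

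The paper avoids the whole difficulty by attacking $P=\sqrt I$ rather than $I$: it first notes $gr(\sqrt I)=gr(I)=I(H,S)$ by \cite[Lemma 5.4]{Mulp-Ranga} (which settles your ``graded part of $\bar P$ vanishes'' worry, and shows $P$ is non-graded), and then invokes the classification of non-graded prime ideals \cite[Theorem 3.12]{R-1}: $P=I(H,B_H)+\langle p(c)\rangle$ with $E\backslash(H,B_H)$ \emph{downward directed}. Downward directedness immediately forces $c$ to be the only exit-free cycle in the quotient graph, so the sum in Theorem \ref{reqthm}(i) collapses to a single term $\langle f(c)\rangle$ for free --- no construction of auxiliary primes is needed. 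The remaining step, $f=p^n$, is then done by your UFD argument or, as in the paper, by noting that two distinct irreducible factors would give two maximal ideals of $M_\Lambda(K[x,x^{-1}])$ one strictly inside the other. As it stands your proposal has a genuine gap at its central reduction.
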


\begin{proof} If $I$ is a graded ideal, then $I=\sqrt I$ by 
\cite[Lemma 2.1]{EER}. So if $\sqrt I$ is prime, then trivially $I$ is a prime power.
Suppose now that $I$ is a non-graded ideal such that its radical $\sqrt I$ is a prime ideal. By \cite[Theorem 4]{R-2}, 
$I=I(H,S)+{\displaystyle\sum\limits_{i\in X}}
\langle f_{i}(c_{i})\rangle $, where the $c_{i}$ are distinct cycles without exits in
$E\backslash(H,S)$, $f_{i}(x)$ are polynomials in $K[x]$ with non-zero
constant terms. Now, by \cite[Lemma 5.4]{Mulp-Ranga}, $gr(\sqrt I)=gr(I)=I(H,S)$. 
By \cite[Theorem 3.12]{R-1}, the prime ideal $\sqrt I=I(H,B_{H})+ \langle p(c)\rangle $ where $ p(x)\in
K[x]$ is an irreducible polynomial with non-zero constant term, $c$ is a cycle without exists in $E\backslash(H,B_{H})$ and, 
moreover, $E\backslash(H,B_{H})$ is downward directed. The downward directness of $E\backslash(H,B_{H})$ implies that $c$ is the only cycle without exits in $E\backslash(H,B_{H})$. Thus $I$ must be of the form $I=I(H,B_{H})+ \langle f(c)\rangle $ 
where $f(x) \in K[x]$ has its constant term non-zero.   
We claim $f(x)=p^{n}(x)$ for some integer $n\geq 1$.
Suppose, on the contrary, $f(x)=q(x)g(x)$ where $q(x)\neq p(x)$ is an irreducible
polynomial with non-zero constant term. Then 
$I=I(H,B_{H})+\langle f(c) \rangle \subseteq I(H,B_{H})+\langle q(c)\rangle $. Since $E\backslash(H,B_{H})$ is downward directed, 
$I(H,B_{H})+\langle q(c) \rangle $ is a prime ideal (\cite[Theorem 3.12]{R-1}) and so it contains the radical of $I$, namely 
$I(H,B_{H})+ \langle p(c)\rangle $. Then, in $L/I(H,B_{H})\cong L_{K}(E\backslash(H,B_{H}))$,
$\langle p(c) \rangle \subsetneq \langle q(c)\rangle \subseteq M$, the ideal generated by $c^{0}$. Now,
by \cite[Theorem 2.7.1]{AAS}, $M\cong M_{\Lambda}(K[x,x^{-1}])$ for a suitable set $\Lambda$ and by 
Proposition \ref{Ideals in Mtrix rings}, the ideal lattices of $M_{\Lambda}(K[x,x^{-1}])$ and $K[x,x^{-1}]$ are isomorphic.
Now, both $\langle p(x) \rangle $ and $ \langle q(x)\rangle $ are maximal ideals in $K[x,x^{-1}]$ and so,
identifying $M$ with $M_{\Lambda}(K[x,x^{-1}])$, both $\langle p(c) \rangle $ and $ \langle q(c)\rangle $ are maximal ideals of $M$. 
This is a contradiction since $ \langle  p(c) \rangle \subsetneq \langle q(c) \rangle $.
Hence $f(x)=p^{n}(x)$ for some $n \geq1$. Then $I=I(H,B_{H})+ \langle p^{n}
(c) \rangle =(I(H,B_{H})+ \langle p(c) \rangle)^{n}$ is a power of the prime ideal $I(H,B_{H})+  \langle p(c) \rangle$.
\end{proof}

\bigskip

Recall that, given a non-zero prime ideal $P$, an ideal $I$ is called
$P$\textit{-primary}, if its radical $\sqrt I=P$. It is known (see Theorem
6.20, \cite{Larsen}) a noetherian domain $D$ is a Dedekind domain if and only if
for any non-zero prime ideal $P$ of $D$, the set of all the $P$-primary ideals
of $D$ is a totally ordered set under inclusion and, if and only if there are
no ideals of $D$ strictly between $P$ and $P^{2}$. As an easy corollary to
Theorem \ref{aDD}, we show that a Leavitt path algebra $L$ satisfies both
these properties of a Dedekind domain.

\begin{corollary} Let $L$ be a Leavitt path algebra and let $P$ be a
non-zero prime ideal of $L$. Then

(i) \ the set of all the $P$-primary ideals is totally ordered under set inclusion;

(ii) there is no ideal $A$ of $L$ satisfying $P^{2}\subsetneq A\subsetneq P$.
\end{corollary}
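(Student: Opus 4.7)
The plan is to derive both parts directly from Theorem \ref{aDD}, which already says that every ideal whose radical is prime must be a power of that prime. So essentially all the real work has been done, and the corollary is a packaging of that theorem.

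First I would establish the auxiliary identity $\sqrt{P^{n}}=P$ for every $n\geq 1$. The inclusion $\sqrt{P^{n}}\subseteq \sqrt{P}=P$ is immediate from $P^{n}\subseteq P$; for the reverse, given $x\in P$ the element $x^{n}$ is a product of $n$ elements of $P$ and hence lies in $P^{n}$, so $x\in \sqrt{P^{n}}$. Combined with Theorem \ref{aDD}, this shows that the $P$-primary ideals of $L$ are exactly the ideals $P, P^{2}, P^{3},\ldots$

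For part (i), once the $P$-primary ideals are identified with the set $\{P^{n}:n\geq 1\}$, total ordering is automatic: since $P^{n+1}=P\cdot P^{n}\subseteq P^{n}$, the sequence forms a descending chain $P\supseteq P^{2}\supseteq P^{3}\supseteq \cdots$, which is totally ordered (in fact well-ordered) by reverse inclusion.

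For part (ii), suppose for contradiction that $P^{2}\subsetneq A\subsetneq P$. Taking radicals and using the monotonicity of $\sqrt{\cdot}$ together with $\sqrt{P^{2}}=P=\sqrt{P}$ forces $\sqrt{A}=P$, so $A$ is $P$-primary. Theorem \ref{aDD} then gives $A=P^{n}$ for some $n\geq 1$. The strict inclusion $A\subsetneq P=P^{1}$ rules out $n=1$, so $n\geq 2$, but then $A=P^{n}\subseteq P^{2}$, contradicting $P^{2}\subsetneq A$. There is no obstacle here worth flagging; the only point that requires any care is the double inclusion used to identify $\sqrt{A}$ with $P$, and that falls out of the elementary identity for $\sqrt{P^{n}}$ noted above.
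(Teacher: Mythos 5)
Your proposal is correct and follows essentially the same route as the paper: both reduce everything to Theorem \ref{aDD}, identify the $P$-primary ideals with powers of $P$ (a chain), and for (ii) observe that any $A$ between $P^{2}$ and $P$ has radical $P$ and hence is a power of $P$, which is impossible strictly between consecutive powers. The only caveat is that since $\sqrt{\cdot}$ is defined here as an intersection of prime ideals of a noncommutative ring, the inclusion $P\subseteq\sqrt{P^{n}}$ is better justified by noting that any prime ideal $Q\supseteq P^{n}$ must contain $P$ (inductively, from the ideal-product formulation of primeness) rather than by the element-wise argument ``$x^{n}\in P^{n}$ implies $x\in\sqrt{P^{n}}$,'' which implicitly uses the commutative characterization of the radical.
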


\begin{proof}
(i) Note that if $I$ is a $P$-primary ideal of $L$, then
$\sqrt I=P$. By Theorem \ref{aDD}, $I=P^k$ for some $k \geq 1$. 
Thus the set of $P$-primary ideals of $L$ is the set $\{P^{n}:n\geq1\}$ which is a
totally ordered (countable) set under inclusion.

(ii) We shall actually show that there is no ideal $A$ satisfying
$P^{n+1}\subsetneq A\subsetneq P^{n}$ for any $n\geq 1$. Note that if
there is an ideal $A$ of $L$ satisfying $P^{n+1}\subseteq A\subseteq P^{n}$
for some $n\geq 1$, then $\sqrt A=P$ and so, by Theorem \ref{aDD}, $A$ is a
power of $P$ and hence $A=P^{n+1}$ or $P^{n}$. 
\end{proof}

Next, we consider the cancellative property of the ideals of an almost
Dedekind domains. This property does not seem to hold in arbitrary Leavitt
path algebras as the next two examples show.

\begin{example}
\label{CounExmp}Consider the following graph $E$:

$$ \xymatrix{	&&& {\bullet}^{v_4} & {\bullet}^{v_6} \ar@{->}[l]_{e_4} 
\ar@{->}[r]^{\infty} & {\bullet}^{v_7}\\
{\bullet}^{v_1} &  {\bullet}^{v_2} \ar@{->}[l]_{\infty}    \ar@{->}[r]^{e_1} &
{\bullet}^{v_3} \ar@{->}[ur]^{e_2} \ar@{->}[r]^{\infty} & {\bullet}^{v_5} \ar@{->}[u]^{e_3}  & & }
	$$ 
	(In a graph $ \xymatrix{	 {\bullet}^{v} \ar@{->}[r]^{\infty} & {\bullet}^{w} }	$ means that  there are infinitely many edges emitted from $v$ to $w$, that is $v$ is an infinite emitter. ) 

Given the ideals $A=\langle v_1 \rangle $, $B=\langle v_5\rangle $ and $C=\langle v_7 \rangle $ of $L_{K}(E)$. Clearly $B\neq C$ but $AB=0=AC$.
\end{example}

\bigskip

\begin{example}
\label{CounExmp2} Consider the graph $E$

\medskip
	
	$$ \xymatrix{	{\bullet}^u  \ar@(ul,dl)_c   \ar@{->}[r]  & {\bullet}^v & {\bullet}^w \ar@{->}[l]		\ar@{->}[r] & {\bullet}^z }
	$$ 
	\medskip

Then $H = \{ v \}$ is a hereditary saturated subset. Let 
$A =\langle H \rangle$, be the principal ideal generated by $H$. 
Clearly $c$ has no exits in $E \backslash H$. Let $B$ be the nongraded ideal $A+ \langle p(c) \rangle$, where $p(x)$ is a polynomial
in $K[x]$. Clearly $gr(B) = A$. Since $A$ is a graded ideal, we apply
\cite[Lemma 3.1(i)]{Mulp-Ranga}, to conclude that $AB = A \cap B = A = A^2 = AA$. However, $A \neq B$.
\end{example}


The next theorem gives necessary and sufficient conditions (both graphical and algebraic) under which
non-zero finitely generated  ideals of $L$ is
cancellative. Interestingly, in this case, every non-zero ideal of $L$ also turns out to be cancellative.\\

We prove a useful lemma and in its proof we shall again use the result
 \cite[Lemma 3.1]{R-2} that if $A$ is a graded ideal of $L$, then for any
other ideal $B$, $A\cdot B=A\cap B$ and that $A^{2}=A$.

\begin{lemma}
	\label{Zero graded part} If the cancellation property for finitely generated
	ideals holds in $L$, then there cannot be two ideals $A,B$ with
	$A\subsetneq B$ and $A$ graded and non-zero. In particular, $gr(B)=0$.  
\end{lemma}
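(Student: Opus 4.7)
The approach is by contradiction, reducing to a finitely generated non-zero graded sub-ideal of $A$ so that the cancellation hypothesis becomes applicable. Assume for contradiction that there exist ideals $A\subsetneq B$ in $L$ with $A$ graded and non-zero. The first step is to locate a vertex of $E$ inside $A$. Writing $A=I(H,S)$ with $H=A\cap E^{0}$ hereditary saturated and $S\subseteq B_{H}$ (as afforded by the structure theorem for graded ideals), I would note that $B_{\emptyset }=\emptyset$, since an infinite emitter cannot have only finitely many edges into $E^{0}$; hence if $H=\emptyset$ we would have $A=0$, contrary to assumption. Therefore $H$ is non-empty and any $v\in H$ lies in $A$.

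Now set $A':=\langle v\rangle$. This ideal is principal (hence finitely generated), is graded because $v$ is homogeneous of degree $0$, is non-zero, and satisfies $A'\subseteq A\subsetneq B$. Applying the identity $X\cdot Y=X\cap Y$ for graded $X$ (\cite[Lemma 3.1(i)]{Mulp-Ranga}) both to $(X,Y)=(A',A')$ and to $(X,Y)=(A',B)$ yields
\[
A'\cdot A'\;=\;A'\cap A'\;=\;A'\;=\;A'\cap B\;=\;A'\cdot B.
\]
Since $A'$ is finitely generated and non-zero, the assumed cancellation property forces $A'=B$, contradicting $A'\subseteq A\subsetneq B$.

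For the \emph{in particular} clause, apply what has just been proved to the pair $(gr(B),B)$ for an arbitrary non-graded ideal $B$: the graded part $gr(B)$ is graded by construction and is properly contained in $B$, hence cannot be non-zero, so $gr(B)=0$.

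The only delicate input in the plan is the initial production of a vertex inside a non-zero graded ideal; once that reduction is in place the remainder is a two-line manipulation using the graded-ideal identities recorded in the preliminaries together with the cancellation hypothesis.
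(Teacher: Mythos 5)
Your proof is correct and follows essentially the same route as the paper's: locate a vertex $u\in H=A\cap E^{0}$, form the principal graded ideal $\langle u\rangle$, and use $X\cdot Y=X\cap Y$ for graded $X$ to contradict cancellation; the ``in particular'' clause is obtained exactly as in the paper by taking $A=gr(B)$. The only difference is that you spell out why $H\neq\emptyset$ (via $B_{\emptyset}=\emptyset$), which the paper leaves implicit.
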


\begin{proof}
	By \cite{T}, $A=I(H,S)$, where $H=A\cap E^{0}$ and $S\subseteq B_{H}$. Since
	$A\neq0$, $H\neq\emptyset$. Then, for a vertex $u\in H$, $C=\left\langle
	u\right\rangle$ is a finitely
	generated graded ideal. If $ B $ is an ideal such that $A\subsetneq B$ then we get $C\cdot B=C\cap B=C=C\cdot C$, but $B\neq
	C$, a contradiction to the cancellation property. This proves the first
	statement. By taking $A=gr(B)$, the second statement follows from the first. 
\end{proof}

\begin{corollary}
	\label{closure of a single vertex} If the cancellation property for finitely
	generated ideals holds in $L$, then every non-empty hereditary saturated
	subset $H\subsetneq E^{0}$ is the hereditary saturated closure of each
	single vertex $u\in H$ and, moreover, $B_{H}=\emptyset$. In particular, every
	non-zero graded ideal of $L$ is a principal ideal, being generated by a single vertex.
\end{corollary}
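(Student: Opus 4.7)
My plan is to extract everything from Lemma \ref{Zero graded part}---which, under the cancellation hypothesis, says that a non-zero graded ideal $A$ is never strictly contained in any other ideal of $L$---combined with the bijection between admissible pairs $(H,S)$ and graded ideals $I(H,S)$ recalled from \cite{T}. Both conclusions of the corollary will be proved by producing a strict chain of graded ideals and contradicting the lemma.

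First I would show $B_{H}=\emptyset$ for any non-empty hereditary saturated $H\subsetneq E^{0}$. Suppose, for contradiction, that some $v\in B_{H}$ exists. Then $(H,\emptyset)$ and $(H,\{v\})$ are distinct admissible pairs, so by the admissible-pair/graded-ideal correspondence one has $\langle H\rangle=I(H,\emptyset)\subsetneq I(H,\{v\})$. Since $\langle H\rangle$ is non-zero and graded, this violates Lemma \ref{Zero graded part}, forcing $B_{H}=\emptyset$. Next I would show that $H$ equals the hereditary saturated closure $H_{u}$ of each $u\in H$. Since $H$ is hereditary and saturated, $H_{u}\subseteq H$, so $\langle u\rangle=I(H_{u},\emptyset)\subseteq I(H,\emptyset)=\langle H\rangle$. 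A strict inclusion would again contradict Lemma \ref{Zero graded part} applied to the non-zero graded ideal $\langle u\rangle$; hence $H_{u}=H$ and $\langle u\rangle=\langle H\rangle$.

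For the ``in particular'' I would let $I$ be any non-zero graded ideal, write $I=I(H,S)$ with $H=I\cap E^{0}$ non-empty and $S\subseteq B_{H}$, and split on whether $H$ is proper. If $H\subsetneq E^{0}$, the previous two steps give $S=\emptyset$ and $I=\langle H\rangle=\langle u\rangle$ for any $u\in H$. If instead $H=E^{0}$, then $I=L$; here I would pick any $v\in E^{0}$ and note that $\langle v\rangle$ is non-zero graded and therefore maximal in the ideal lattice of $L$ by Lemma \ref{Zero graded part}. If $\langle v\rangle\neq L$, any $w\in E^{0}\setminus\langle v\rangle$ would give $\langle v\rangle\subsetneq\langle v\rangle+\langle w\rangle$, again contradicting the lemma; hence $L=\langle v\rangle$.

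The main obstacle I anticipate is precisely this last sub-case $H=E^{0}$: Lemma \ref{Zero graded part} is stated for ideals that are strictly contained in something, and so it gives no direct information about $L$ itself. A separate short argument---the maximality-plus-sum observation above---is needed to conclude that $L$ is principal. Everything else is a mechanical application of the lemma together with the admissible-pair/graded-ideal correspondence.
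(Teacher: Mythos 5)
Your proposal is correct and follows essentially the same route as the paper: both conclusions are obtained by exhibiting a strict chain of ideals whose smaller member is graded and non-zero (via the admissible-pair correspondence, $I(H,\emptyset)\subsetneq I(H,\{v\})$ for the claim $B_H=\emptyset$, and $\langle u\rangle\subseteq\langle H\rangle$ for the closure claim) and invoking Lemma~\ref{Zero graded part}. Your extra sub-case $H=E^{0}$ is a point the paper silently skips (it implicitly treats only proper graded ideals $I(H,S)$ with $H\subsetneq E^{0}$), and your handling of it is consistent with the literal statement of Lemma~\ref{Zero graded part}, so it is a harmless refinement rather than a divergence.
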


\begin{proof}
	If there is a vertex $u\in H$ such that the hereditary saturated closure $X$
	of $\{u\}$ is not equal to $H$, then the non-zero graded ideal $A=\langle X \rangle$
	satisfies $A\subsetneq \langle H \rangle $, contradicting Lemma \ref{Zero graded part}.
	Likewise, if $B_{H}\neq\emptyset$, then again we have the non-zero graded
	ideal $I(H,\emptyset)\subsetneq I(H,B_{H})$, contradiction Lemma
	\ref{Zero graded part}. If $A=I(H,S)$ is a non-zero graded ideal of $L$, then
	since $B_{H}=\emptyset$, $S=\emptyset$ and since $H$ is the hereditary saturated closure
	of any vertex $u\in H$, $A=\langle H \rangle=\left\langle \{u\}\right\rangle$ is a principal ideal.
\end{proof}

\begin{theorem}
	\label{Cancellation Thm}Let $E$ be an arbitrary graph. The following
	conditions are equivalent for $L$:
	
	\begin{description}
		\item[(i)] The cancellation property holds for all non-zero ideals in $L$;
		
		\item[(ii)] The cancellation property holds for all non-zero finitely
		generated ideals of $L$;
		
		\item[(iii)] Either (a) there is a cycle $c$ without exits in $E$ based at
		a vertex $v$ such that $u\geq v$ for every $u\in E^{0}$ and 
		$\mathbf{H}_{E}\mathbf{=\{}H\mathbf{\}}$ where $H$ is the hereditary 
		saturated closure of $\{v\}$ and $B_{H}=\emptyset$, or 
		(b) $E$ satisfies Condition (K),
		$|\mathbf{H}_{E}|\leq2$, for any two $X,Y\in\mathbf{H}_{E}$ with $X\neq Y$,
		$X\cap Y=\emptyset$ and, for each $H\in\mathbf{H}_{E}$, $B_{H}$ is empty and
		$H$ is the saturated closure of each $u\in H$.
		
		\item[(iv)] Either (a) $L$ contains a graded ideal $M$ which contains every
		proper ideal of $L$ and $M\cong M_{\Lambda}(K[x,x^{-1}])$ where $\Lambda$ is
		an arbitrary finite or infinite index set or (b) $L$ has at most two non-zero
		ideals each of which is graded and is a principal ideal and is a principal ideal.
	\end{description}
\end{theorem}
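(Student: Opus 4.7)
The plan is to establish the cyclic chain $(i)\Rightarrow(ii)\Rightarrow(iii)\Rightarrow(iv)\Rightarrow(i)$; only $(i)\Rightarrow(ii)$ is immediate, since every nonzero finitely generated ideal is a nonzero ideal.

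For $(ii)\Rightarrow(iii)$, I would first invoke Lemma \ref{Zero graded part} and Corollary \ref{closure of a single vertex} to extract the following structural data: every nonzero graded ideal is principal and is not properly contained in any other proper ideal, while for every $H\in\mathbf{H}_E$ we have $B_H=\emptyset$ and $H$ equals the saturated closure of each of its vertices. I would then split on whether $L$ has a non-graded ideal. In Case A (yes), Lemma \ref{Zero graded part} makes the graded part of that ideal zero, so by Theorem \ref{reqthm}(i) there is a cycle $c$ without exits in $E$, say based at $v$, and a non-constant $f\in K[x]$ with $\langle f(c)\rangle$ a nonzero non-graded subideal of $\langle c^0\rangle$. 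A single product-zero trick will then show that $c$ is the only exit-free cycle and that $u\geq v$ for every $u\in E^0$: if $c'\neq c$ were another exit-free cycle (respectively, if $u$ failed to reach $v$), then the saturated closure of $(c')^0$ (respectively of $\{u\}$) would be a set $X\in\mathbf{H}_E$ disjoint from the saturated closure $Y$ of $c^0$. Both $\langle X\rangle$ and $\langle Y\rangle$ are graded, so $\langle X\rangle\langle Y\rangle=\langle X\cap Y\rangle=0$; consequently $\langle X\rangle\langle f(c)\rangle=0=\langle X\rangle\langle c^0\rangle$ with $\langle f(c)\rangle\neq\langle c^0\rangle$ (one non-graded, one graded), contradicting cancellation by the finitely generated principal ideal $\langle X\rangle$. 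Combined with Corollary \ref{closure of a single vertex}, this yields (iii)(a). In Case B (no non-graded ideals), $E$ satisfies Condition (K). Distinct elements of $\mathbf{H}_E$ are pairwise disjoint by Corollary \ref{closure of a single vertex} (if two shared a vertex, each would be its saturated closure), and the same trick applied to three hypothetically pairwise disjoint $H_1,H_2,H_3\in\mathbf{H}_E$ yields $\langle H_1\rangle\langle H_2\rangle=0=\langle H_1\rangle\langle H_3\rangle$ with $\langle H_2\rangle\neq\langle H_3\rangle$. So $|\mathbf{H}_E|\leq 2$, giving (iii)(b).

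For $(iii)\Rightarrow(iv)$: in case (a), reachability forces the saturated closure of $\{v\}$ to be all of $E^0$, so the graded ideal $M$ generated by $c^0$ equals $L$, and \cite[Theorem 2.7.3]{AAS} gives $L\cong M_\Lambda(K[x,x^{-1}])$ with $\Lambda$ indexing the paths entering $c$; this is (iv)(a). In case (b), Condition (K) makes every ideal graded, and the at most two disjoint elements of $\mathbf{H}_E$ give at most two principal graded ideals by Corollary \ref{closure of a single vertex}, which is (iv)(b). For $(iv)\Rightarrow(i)$: in (iv)(a), Proposition \ref{Ideals in Mtrix rings} supplies a multiplication- and lattice-preserving bijection between the ideals of $L$ and those of the PID $K[x,x^{-1}]$, and cancellation lifts from the latter. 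In (iv)(b) the ideal lattice is finite, and cancellation is verified directly using the idempotence $\langle H\rangle^2=\langle H\rangle$ and the vanishing $\langle H_1\rangle\langle H_2\rangle=0$ of products across disjoint sets.

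The main obstacle is Case A of $(ii)\Rightarrow(iii)$, where the algebraic cancellation hypothesis must be translated into the graph-theoretic uniqueness of the exit-free cycle and universal reachability of its base. The unifying technique is the product-zero mechanism: any two disjoint hereditary saturated sets yield graded ideals whose product vanishes, and any nonzero non-graded subideal of one of them, provided by Theorem \ref{reqthm}(i), then furnishes the cancellation-violating pair.
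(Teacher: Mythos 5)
Your overall architecture matches the paper's ($(i)\Rightarrow(ii)\Rightarrow(iii)\Rightarrow(iv)\Rightarrow(i)$, resting on Lemma \ref{Zero graded part} and Corollary \ref{closure of a single vertex}), and your $(ii)\Rightarrow(iii)$ is a legitimate variant: where the paper cases on the existence of a non-graded \emph{prime} ideal and invokes \cite[Theorem 3.12]{R-1} to get downward directedness toward $v$, you work with an arbitrary non-graded ideal via Theorem \ref{reqthm}(i) and extract both the uniqueness of the exit-free cycle and $u\geq v$ from your product-zero mechanism. That part is sound, since disjoint hereditary sets do have disjoint saturated closures and the resulting graded ideals multiply to zero.

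The genuine error is in $(iii)\Rightarrow(iv)$, case (a). The claim that ``reachability forces the saturated closure of $\{v\}$ to be all of $E^0$, so $M=L$'' is false: $u\geq v$ says every vertex flows \emph{into} $v$, whereas the hereditary closure of $\{v\}$ consists of the vertices reached \emph{from} $v$ (here just $c^0$, since $c$ has no exits), and saturation only absorbs a vertex \emph{all} of whose emitted edges land inside the set. The paper's own Example \ref{CancelExmp1} refutes your claim: in $E_1$ one has $u\geq v$, yet the saturated closure of $\{v\}$ is $\{v\}\neq E^0$ because $u$ also emits a loop to itself; indeed hypothesis (iii)(a) itself places $H$ in $\mathbf{H}_{E}$, i.e.\ $H$ is \emph{proper}, so $M=\langle H\rangle\subsetneq L$. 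Consequently you never prove the actual content of (iv)(a) --- that the proper graded ideal $M$ contains every proper ideal of $L$ --- which the paper obtains by showing that any non-zero ideal $N\neq M$ must be non-graded with $gr(N)=0$, hence (using downward directedness and \cite[Lemma 3.5]{R-1}) of the form $\langle f(c')\rangle$ with $c'=c$, so $N\subseteq M$. The same misidentification infects your $(iv)(a)\Rightarrow(i)$: Proposition \ref{Ideals in Mtrix rings} gives the multiplication-preserving lattice isomorphism for the ideals of $M$, not of $L$, and one still needs the paper's additional steps that $M$ has local units so its ideals are ideals of $L$, that all proper ideals of $L$ live inside $M$, and that $M$ itself cancels because it is graded.
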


\begin{proof}
	Clearly (i)$\Rightarrow$(ii).

	Assume (ii). Case (a): Suppose $L$ has a non-graded prime ideal $P$ with
	$P\cap E^{0}=H^{\prime}$. By  \cite[Theorem 3.12]{R-1}, $P=I(H^{\prime
	},B_{H^{\prime}})+\langle p(c) \rangle$, where $c$ is a cycle without exits based at a vertex
	$v$ in $E\backslash(H^{\prime},B_{H}^{\prime})$, $p(x)$ is an irreducible
	polynomial in $K[x,x^{-1}]$ such that $p(c)\in P$ and $u\geq v$ for all $u\in
	E^{0}\backslash H^{\prime}$. By Lemma \ref{Zero graded part}, $I(H^{\prime
	},B_{H^{\prime}})=0$ and hence both $H^{\prime}$ and $B_{H^{\prime}}$ are
	empty. This means that $E$ contains a unique cycle $c$ without exits based
	at a vertex $v$ and $u\geq v$ for every vertex $u\in E^{0}$. Let $H$ be the
	hereditary saturated closure of $\{v\}$. Observe that there cannot be two
	members $X,Y\in\mathbf{H}_{E}$ with one of them properly containing the other, say
	$X\subsetneq Y$. Because, we will then have two non-zero graded ideals $A=\langle X \rangle$
	and $B=\langle Y \rangle$ with $A\subsetneq B$ and this is not possible by Lemma 	\ref{Zero graded part}. 
	We claim that $\mathbf{H}_{E}\mathbf{=\{}H\mathbf{\}}$. Suppose, on the contrary, 
	there is another element $Z\in\mathbf{H}_{E}$. As
	noted above, $Z\nsubseteq H$ and $H\nsubseteq Z$. So $Z\cap H\subsetneq
	H$ and is further non-empty since $v\in Z\cap H$. This then gives rise to
	two\ non-empty hereditary saturated subsets with one containing the other, a
	contradiction. Also $B_{H}=\emptyset$, by Corollary
	\ref{closure of a single vertex}. This proves (iii)(a).
	
	Case (b): Suppose every prime ideal of $L$ is graded. Then, by \cite[Corollary 3.13]{R-1}, 
	$E$ satisfies Condition (K) and so every ideal of $L$ is
	graded.  If $|\mathbf{H}_{E}|=0$, then $\mathbf{H}_{E}=\emptyset$ and we are
	done. Assume $|\mathbf{H}_{E}|\neq0$. Suppose $X,Y\in\mathbf{H}_{E}$ with
	$X\neq Y$. We claim $X\cap Y=\emptyset$. Because if a vertex $u\in X\cap Y$,
	then, by Corollary \ref{closure of a single vertex}, both $X$ and $Y$ are
	saturated closures of $\{u\}$ and hence $X=Y$, a contradiction.  We claim that
	$|\mathbf{H}_{E}|\leq2$. Indeed if there are three distinct members
	$X,Y,Z\in\mathbf{H}_{E}$, then the three distinct non-zero graded ideals
$A=\langle X \rangle,B=\langle Y \rangle,C=\langle Z \rangle$ are principal by Corollary
\ref{closure of a single vertex} and satisfy $A\cdot B=A\cap B=0=A\cap
C=A\cdot C$, but $B\neq C$. This contradiction shows that $|\mathbf{H}%
_{E}|\leq2$. By Corollary
	\ref{closure of a single vertex}, each $H\in\mathbf{H}_{E}$ is the hereditary
	saturated closure of each $u\in H$ and the corresponding $B_{H}=\emptyset$.
	This proves (iii)(b).
	
	Assume (iii) (a). Let $M$ be the ideal of $L$ generated by the hereditary saturated closure $H$ of $\{v\}$,	
	so $M$ is a graded ideal of $L$.  Since $c$ is a
	cycle without exits, by \cite[Lemma 2.7.1]{AAS}, $M\cong M_{\Lambda
	}(K[x,x^{-1}])$ where $\Lambda$ is an arbitrary finite or infinite index,
	being the set of all paths in $E$ that end at $v$, but do not include the
	entire cycle $c$. Since $\mathbf{H}_{E}=\{H\}$ and since $B_{H}=\emptyset$,
	$M$ is the only non-zero graded ideal of $L$. We claim that every non-zero
	ideal $N$ of $L$ is contained in $M$ (and non-graded). Suppose $N\neq M$. Now
	$N$ must be a non-graded ideal, since $M$ is the only non-zero graded ideal of
	$L$. \ Then, by Lemma \ref{Zero graded part}, $gr(N)=0$ and so $N$ does not
	contain any vertices. As $E^{0}$ is downward directed,  \cite[Lemma 3.5]{R-1}
	then implies that $N=\langle f(c^{\prime}) \rangle$ where $f(x)\in K[x]$ and $c^{\prime}$ is
	a cycle without exits in $E$. Since $u\geq v$ for every $u\in E^{0}$, $c$ is
	the only cycle without exits in $E$ and so $c^{\prime}=c$. Then
	$N=\langle f(c)\rangle \subset M$, as $c\in M$. This proves (iv)(a).
	
	Assume (iii)(b). Since Condition (K) holds, every ideal of $L$ is graded \cite{AAS}. If
	$\mathbf{H}_{E}\mathbf{=\emptyset}$, then the only hereditary saturated
	subsets of $E^{0}$ are $E^{0}$ and $\emptyset$ and so $L$ contains no non-zero
	proper ideals. Suppose $\mathbf{H}_{E}\mathbf{=\{}H\mathbf{\}}$, with
	$B_{H}=\emptyset$. Then $I=\langle H,\emptyset\rangle$  is the only proper non-zero ideal
	of $L$. Suppose $\mathbf{H}_{E}\mathbf{=\{}H_{1},H_{2}\mathbf{\}}$. Since $B_{H_{1}}=\emptyset=B_{H_{2}}$, $A=\langle H_{1} \rangle $ and $B=\langle H_{2}\rangle $ are the only proper non-zero ideals of $L$ and they both are principal ideals as
 $H_{1},H_{2}$ are saturated closures of single vertices $v_{1}\in H_{1}$ and $v_{2}\in H_{2}$. This proves (iv)(b).\\
	
	Assume (iv)(a) so that $L$ contains a graded ideal $M\cong M_{\Lambda}(K[x,x^{-1}])$ where $\Lambda$ is an arbitrary finite or infinite index set and that $M$ contains every other proper ideal of $L$. 	
Now, by Proposition 1,
the ideals of $M_{\Lambda}(K[x,x^{-1}])$ are of the form $M_{\Lambda}(I)$
where $I$ are the ideals of $K[x,x^{-1}]$, the map $I\longmapsto M_{\Lambda
}(I)$ is an isomorphism of the ideal lattices of $K[x,x^{-1}]$ and
$M_{\Lambda}(K[x,x^{-1}])$ and, further, $M_{\Lambda}(I\cdot J)=M_{\Lambda
}(I)\cdot$ $M_{\Lambda}(J)$ for any two ideals of $K[x,x^{-1}]$. Since the
cancellation for non-zero ideals hold in the principal ideal domain
$K[x,x^{-1}]$, we conclude that the cancellation property holds for non-zero
ideals in $M$. Now the graded ideal $M$ possesses local units, being
isomorphic to a Leavitt path algebra of a suitable graph (see \cite{T2}). From
this, it easy to show that the ideals of $M$ are also the ideals of $L$. Since
$M$ contains every other ideal $A$ of $L$ and is graded (so $MA=M\cap A$), we
conclude that $M$ is also cancellative. Thus all the non-zero ideals of $L$
have the cancellation property. This proves (i).

Now (iv)(b)$\Rightarrow$
(i) is immediate since $L$ has at most two distinct non-zero proper ideals
which are graded and thus the cancellation property holds trivially in $L$.
\end{proof}

We conclude the paper by illustrating some examples of the graphs that satisfy 
the conditions of part (iii) of Theorem \ref{Cancellation Thm}. 
\begin{example}
\label{CancelExmp1} \rm Consider the graph $E_1$ 

\medskip
	
	$$ \xymatrix{	{\bullet}^u  \ar@(ul,dl)  \ar@{->}[r]  & {\bullet}^v \ar@(ur,dr)^c  }
	$$ 
	\medskip

which satisfies the conditions of part (iii) a). So,  
$H = \{ v \}$ is the only non-zero proper hereditary saturated subset and 
$M =\langle H \rangle \cong K[x,x^{-1}]$ contains every proper ideal of $L_K(E_1)$.

\medskip

Consider the graph $R_2$ 

\medskip
	
	$$ \xymatrix{	{\bullet}^u  \ar@(ul,dl) \ar@(ur,dr) } 	$$ 
	
	\medskip

which satisfies the conditions of part (iii) b) and   
there are no non-zero proper hereditary saturated subsets.   
Actually $L_K(R_2)$ is the simple Leavitt algebra of type (1,2).

\medskip

Consider the graph $E_3$ 

\medskip
	
	$$ \xymatrix{	{\bullet}^u   \ar@(ul,ur) \ar@(dl,dr) \ar@{->}[r]  & {\bullet}^v \ar@(ul,ur) \ar@(dl,dr) }
	$$ 
	\medskip

which satisfies the conditions of part (iii) b) and   
$H = \{ v \}$ is the only non-zero proper hereditary saturated subset.  

Consider the graph $F$ 

\medskip

	$$ \xymatrix{	{\bullet}^w  & \ar@{->}[l]   {\bullet}^u  \ar@(ul,ur) \ar@(dl,dr) \ar@{->}[r]  & {\bullet}^v \ar@(ul,ur) \ar@(dl,dr) }$$

\bigskip

which satisfies the conditions of part (iii) b) and   
$H_1 = \{ v \}$ and $H_2 = \{ w \}$ are the only two non-zero proper hereditary saturated subsets. 

\end{example}

{\bf Acknowledgement} \\
The first three authors are deeply thankful to Nesin Math Village, \c{S}irince, Izmir for providing an excellent research environment where this work has been developed.

\bigskip

Song\"{u}l Esin 
	\\19 May\i s mah. No:10A/25 Kad\i k\"{o}y, \.{I}stanbul, Turkey. \texttt{songulesin@gmail.com}\\
	
M\"{u}ge Kanuni
\\ Department of Mathematics, D\"{u}zce University, Konuralp 81620 D\"{u}zce, Turkey. \texttt{mugekanuni@duzce.edu.tr}\\

Ayten Ko\c{c} 
\\ Department of Mathematics and Computer Science, \.{I}stanbul K\"{u}lt\"{u}r University,  \.{I}stanbul, Turkey. \texttt{akoc@iku.edu.tr}\\

Katherine Radler
\\ Department of Mathematics, Saint Louis University, St. Louis, Missouri, 63103, USA. \texttt{katie.radler@slu.edu} \\

Kulumani M. Rangaswamy
\\ Department of Mathematics, University of Colorado. Colorado Springs, CO 80918, USA. \texttt{krangasw@uccs.edu}

\end{document}